\def\LaTeX{\leavevmode L\raise.42ex
    \hbox{\kern-.3em\size{\sfsize}{0pt}\selectfont A}\kern-.15em\TeX}
\newcommand{\BibTeX}{{\rm B\kern-.05em{\sc
          i\kern-.025emb}\kern-.08em\TeX}}
\newtheorem{theorem}{Theorem}
\newtheorem{definition}{Definition}
\newtheorem{corollary}{Corollary}
\newtheorem{proposition}{Proposition}
\newtheorem{lemma}{Lemma}
\numberwithin{theorem}{section}
\numberwithin{corollary}{section}
\numberwithin{proposition}{section}
\numberwithin{lemma}{section}
\numberwithin{equation}{section}
\numberwithin{remark}{section}
\numberwithin{definition}{section}
\newcommand{\R}{\mathbb{R}}
\newcommand{\N}{\mathbb{N}}
\newcommand{\cC}{{\mathcal C}}
\newcommand{\cH}{{\mathcal H}}
\newcommand{\cK}{{\mathcal K}}
\newcommand{\cL}{{\mathcal L}}
\newcommand{\cN}{{\mathcal N}}
\newcommand{\cT}{{\mathcal T}}
\newcommand{\eps}{\varepsilon}
\newcommand{\sN}{{\!\stackrel{}{{\mbox{\tiny$N$}}}}}
\renewcommand{\phi}{\varphi}
\newenvironment{altproof}[1]
{\noindent
{\em Proof of {#1}}.}
{\nopagebreak\mbox{}\hfill $\Box$\par\addvspace{0.5cm}}
\DeclareMathOperator{\diag}{diag}
\DeclareMathOperator{\innt}{int}
\begin{document}

\title[Schr\"odinger system]{Existence and nonexistence of entire solutions for non-cooperative
  cubic elliptic systems}

\author{Hugo Tavares, Susanna Terracini, Gianmaria Verzini and Tobias Weth}

\email{}

\subjclass{}
\keywords{}

\maketitle

\begin{abstract}
In this paper we deal with the cubic Schr\"odinger system 
\[
-\Delta u_i = \sum_{j=1}^n \beta_{ij}u_j^2 u_i,\qquad
u_1,\dots,u_n \ge 0 \qquad \text{in $\R^\sN$ }\ (N\leq 3),
\]
where $\beta=(\beta_{i,j})_{ij}$ is a symmetric matrix with real coefficients and $\beta_{ii}\geq 0$ for every $i=1,\ldots,n$. We analyse the
existence and nonexistence of nontrivial solutions in connection with the properties of the matrix $\beta$, and provide a complete characterization in dimensions $N=1,2$.
Extensions to more general power-type nonlinearities are given.
\end{abstract}

\section{Introduction and main results}
\setcounter{equation}{0}
\label{sec:introduction}

The purpose of the present paper is to analyze existence and
nonexistence of nontrivial solutions of the cubic elliptic system
\begin{equation}
  \label{eq:special-case}
-\Delta u_i = \sum_{j=1}^n \beta_{ij}u_j^2 u_i,\qquad
u_1,\dots,u_n \ge 0 \qquad \text{in $\R^\sN$}
\end{equation}
when $N\leq 3$. Here $(\beta_{ij})_{ij}$ is a symmetric $n \times n$-matrix with
real coefficients and nonnegative diagonal elements, i.e. $\beta_{ii}
\ge 0$ for $i=1,\dots,n$. We say that $u=(u_1,\dots,u_n)$ is a {\em nontrivial solution} of
(\ref{eq:special-case}) if $u_i \not \equiv 0$ for at least one $i \in
\{1,\dots,n\}$, which then implies that $u_i>0$ on $\R^\sN$ by the
maximum principle. In order to motivate our results on \eqref{eq:special-case}, let us first consider the single elliptic equation
\begin{equation}
  \label{eq:1}
 -\Delta u = u^{p-1},\quad u >0 \qquad \text{in $\R^N$.}
\end{equation}
It is well known that \eqref{eq:1} admits solutions if and only if $N\geq 3$ and $p\geq 2^*=2N/(N-2)$. The nonexistence in the complementary cases has been proved by Gidas and Spruck in \cite{gs_nonexistence}. In this paper we will show that for $n\geq 2$ the existence or nonexistence of solutions of \eqref{eq:special-case} depends in a subtle way on the coefficients $\beta_{ij}$.

In the case where $\beta_{ij} \ge 0$ for all $i,j$ and $\beta_{ii}>0$ for all $i$, the elliptic system behaves to a certain extent similarly as
the single equation (\ref{eq:1}) for $p=3$. In this case the system does
not admit nontrivial solutions in dimensions $N\le 3$ (where the cubic
nonlinearity is subcritical).
This follows from a more general nonexistence result of Reichel and
Zou \cite{rz} relying on the method of moving spheres. It is the purpose of the present paper to study the
non-cooperative case where the off-diagonal coefficients $\beta_{ij}$
may be negative and therefore methods based on the maximum principle,
like the method of moving spheres, do not apply. More precisely, we will analyze how $B=(\beta_{ij})_{ij}$ must differ from a
 matrix satisfying $\beta_{ij} \ge 0$ for all $i,j$ in order to allow nontrivial solutions of
(\ref{eq:special-case}). In the special case where $N\leq2$ (unidimensional or planar
problem) or $n=2$ (two-component problem) we will answer this question
completely by giving a necessary and sufficient matrix condition for
the solvability of (\ref{eq:special-case}), see Corollaries
\ref{coro:complete_result 1} and \ref{coro:complete_result 2}
below. By this we complement and extend a recent nonexistence result, which has been obtained for the
two-component problem in \cite{dancer.wei.weth:10}.

Nonexistence results in the whole space -- also called Liouville type
theorems -- for the equation (\ref{eq:1}) and the system
(\ref{eq:special-case}) play a crucial role in deriving a priori
bounds for a larger class of boundary value problems via the rescaling
method of Gidas and Spruck. In fact, in \cite{gs_apriori} Gidas and Spruck have used their nonexistence result for
(\ref{eq:1}) to deduce a priori bounds for solutions of equations of the type
\begin{equation}
\label{eq:3}
\left \{
  \begin{aligned}
 -\Delta u &= f(x,u),\:u>0&&\qquad \text{in $\Omega$,}\\
  u&=0 &&\qquad \text{on $\partial \Omega$,}
  \end{aligned}
\right.
\end{equation}
Here $\Omega \subset \R^n$ is a smooth domain, and it is assumed that
$\frac{f(x,t)}{t^{p-1}} \to h(x)$ uniformly in $x$ as $t \to \infty$ for some subcritical
exponent $p$, where $h
\in \cC(\overline \Omega)$. In the same spirit, Dancer, Wei and Weth
\cite{dancer.wei.weth:10} have obtained some a priori bounds for the class of systems \eqref{eq:special-case}
in the two components case
\begin{equation}
\label{eq:13}
\left \{
  \begin{aligned}
 &-\Delta u +\lambda_1 u = \beta_{11} u^3 + \beta_{12} u v^2 &&\quad \text{in } \Omega\\
 &-\Delta v + \lambda_2v =\beta_{22}v^3 + \beta_{12}u^2 v &&\quad \text{in } \Omega
  \end{aligned}
\right.
\qquad \quad
\begin{aligned}
&u,v\geq 0 &&\quad \text{in $\Omega$}\\
&u,v=0 &&\quad \text{on $\partial \Omega.$}
\end{aligned}
\end{equation}
Another class of Liouville type results for cubic systems has been
proved, under some global growth condition, in \cite{nttv:2010}, allowing to obtain uniform H\"older estimates
for the solutions of system (\ref{eq:13}), and of the more general version
\begin{equation}
  \label{eq:14}
 -\Delta u_i +\lambda_i u_i= \sum_{j=1}^n \beta_{ij}u_j^2 u_i \quad
\text{in $\Omega$}\qquad u_1=\dots=u_n=0\quad \text{ on }\partial \Omega.
\end{equation}
These nonlinear Schr\"odinger systems have received extensive attention in recent years, since they
appear in mathematical models for different phenomena in physics such as nonlinear optics and
Bose--Einstein condensation, see e.g. \cite{EGB,clll,sirakov,ac} and the references therein. In particular, for $\lambda_i > 0$, the question of which conditions on $(\beta_{ij})_{ij}$ assure
the existence of positive solutions has been widely
studied, see e.g. \cite{ac, linwei,mmp,bartschwang,ww1,ww2,sirakov}. For $\lambda_i<0$, existence and multiplicity of solutions in some particular cases were obtained also in \cite{norisramos}.
We remark that, in order to derive a priori bounds for (\ref{eq:14})
via the rescaling method of Gidas and Spruck, a nonexistence result is
needed both for the problem (\ref{eq:special-case}) and for
nonnegative nontrivial solutions of the half space problem
\begin{equation}
\label{eq:15}
 -\Delta u_i = \sum_{j=1}^n \beta_{ij}u_j^2   \quad
 \text{in $\R^\sN_+\;$ for $i=1,\dots,n$,}\qquad u_1=\dots=u_n=0 \; \text{ on }\partial \R^\sN_+,
\end{equation}
where $\R^\sN_+= \{x \in \R^\sN\::\: x_\sN >0\}$. There is strong
evidence that the nonexistence of nontrivial
solutions of (\ref{eq:special-case}) -- for a certain matrix $B$ -- also gives rise to
nonexistence of nontrivial nonnegative solutions of (\ref{eq:15}). In the two
component case $n=2$, this was already observed in
\cite{dancer.wei.weth:10}, but the argument in that paper does not
extend to the case $n \ge 3$. Since problem (\ref{eq:15}) requires
very different techniques, it will be treated in
future work, see  \cite{dancer.weth:10}.

To state our results, we first need to recall some notions for symmetric matrices. So let
$S(n)$ denote the space of
symmetric $n \times n$-matrices with real coefficients, and let $C^n_{+} \subset \R^n$ denote the closed
cone of all $c \in \R^n$ with nonnegative components $c_i$.
A matrix $B=(\beta_{ij})_{ij} \in S(n)$ is called\\
$\bullet$ {\em positive semidefinite (resp. positive definite)} if
$$
\sum
  \limits_{i,j=1}^n \beta_{ij}c_i c_j \ge 0\; \text{for all $c \in
    \R^n$} \qquad
  \text{(resp. $\sum
  \limits_{i,j=1}^n \beta_{ij}c_i c_j > 0$ for all $c \in \R^n
  \setminus \{0\}$);}
$$
$\bullet$ {\em copositive (resp. strictly copositive)} if
$$
\sum \limits_{i,j=1}^n \beta_{ij}c_i c_j \ge 0 \;\text{for all $c \in C^n_+$}
\qquad \text{(resp. $\sum \limits_{i,j=1}^n \beta_{ij}c_i c_j
  > 0$ for all $c \in C^n_+  \setminus \{0\}$);}
$$
We note that copositivity is a weaker condition than positive
semidefiniteness. In case $n \le 4$ every copositive matrix can
be written as a sum of a positive semidefinite matrix and a matrix having only nonnegative components, but this is not true for $n \ge 5$, see \cite{diananda:62}.
Copositive matrices play a significant role in quadratic programming (see \cite{jacobson}), while
-- up to our knowledge --
they have not been discussed in the context of
elliptic systems yet. In case
$n \le 4$, strictly copositivity can be characterized explicitly by
inequalities between the matrix coefficients, see e.g.
\cite{pingyuyu}. In particular,\\
$\bullet$ $B \in S(2)$ is strictly copositive if and only if
\begin{equation}
\label{eq:strictly copositive in dim n=2}
\beta_{11},\beta_{22}>0 \qquad \text{and}\qquad
\beta_{12}>-\sqrt{\beta_{11}\beta_{22}}.
\end{equation}
$\bullet$ $B\in S(3)$ is strictly copositive if and only if
\begin{equation}
\label{eq:strictly copositive in dim n=3}
\left \{
  \begin{aligned}
  &\beta_{11},\beta_{22},\beta_{33}>0,\\
  &\beta_{12}+\sqrt{\beta_{11}\beta_{22}}>0,\;
  \beta_{13}+\sqrt{\beta_{11}\beta_{33}}>0,\;
  \beta_{23}+\sqrt{\beta_{22}\beta_{33}}>0 \qquad \text{and}\\
  &\sqrt{\beta_{11}\beta_{22}\beta_{33}} + \beta_{12}\sqrt{\beta_{33}}+ \beta_{13}\sqrt{\beta_{22}} + \beta_{23}\sqrt{\beta_{11}}+\\
&+\sqrt{ 2 (\beta_{12}+\sqrt{\beta_{11}\beta_{22}}) (\beta_{13}+\sqrt{\beta_{11}\beta_{33}}) (\beta_{23}+\sqrt{\beta_{22}\beta_{33}})  }>0.
\end{aligned}
\right.
\end{equation}
Our first result shows that strict copositivity is a
necessary assumption for the nonexistence of nontrivial solutions of
(\ref{eq:special-case}).

\begin{theorem}
\label{sec:exist-nonex-rsn-1}
Let $N\leq 3$, suppose that $\beta_{ii} \ge 0$ for $i=1,\dots,n$, and that
the matrix $B= (\beta_{ij})_{ij}$ is {\em not} strictly copositive.
Then \eqref{eq:special-case} admits a nontrivial solution.
\end{theorem}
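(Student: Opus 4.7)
The plan is to exploit the failure of strict copositivity through the minimization problem
\[
m := \min\Bigl\{ \langle Bb, b\rangle : b \in C^n_+, \ \textstyle\sum_{i=1}^n b_i = 1 \Bigr\}.
\]
Since $B$ is not strictly copositive, $m \leq 0$, and a minimizer $b^* \in C^n_+$ exists by compactness. Let $I := \supp(b^*) \neq \emptyset$. The Karush--Kuhn--Tucker optimality conditions yield $(Bb^*)_i = m$ for every $i \in I$ (and $(Bb^*)_i \geq m$ for $i \notin I$), i.e.\ $B_{II} b^*_I = m\, \mathbf{1}_I$, where $\mathbf{1}_I$ is the all-ones vector indexed by $I$.

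\emph{Easy case $m = 0$.} Here $(Bb^*)_i = 0$ for all $i \in I$. I would define $u_i \equiv \sqrt{b^*_i}$ for $i \in I$ and $u_i \equiv 0$ otherwise; this is a constant map on $\R^\sN$, so $-\Delta u_i = 0$, while for $i \in I$,
\[
\sum_{j=1}^n \beta_{ij} u_j^2 u_i = u_i \sum_{j \in I} \beta_{ij} b^*_j = u_i (Bb^*)_i = 0,
\]
and for $i \notin I$ both sides vanish trivially. Since $b^* \neq 0$, this is a nontrivial constant solution. The case covers, in particular, the situation where $\beta_{ii}=0$ for some $i$ (take $b^* = e_i$) as well as the situation where $B$ is copositive but not strictly copositive.

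\emph{Hard case $m < 0$.} This is where I expect the main obstacle to lie. The previous constant ansatz fails, since $(Bb^*)_i < 0$ on $I$. More generally, any ansatz of the form $u_i = a_i \phi(x)$ with constants $a_i \geq 0$ reduces, on $\supp(a)$, to a scalar equation $-\Delta \phi = \mu \phi^3$ with $\mu := \sum_j \beta_{ij} a_j^2$ necessarily constant across $i \in \supp(a)$; for $\mu > 0$ the Gidas--Spruck theorem rules out entire positive solutions in $N \leq 3$, for $\mu < 0$ a Keller--Osserman argument does the same (the associated energy blows up in finite time), and $\mu = 0$ brings us back to the easy case. Thus a genuinely non-constant and non-separated construction is needed. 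My line of attack is variational: exploit that the minimizer $b^*$ furnishes test fields (e.g.\ $u_i = \sqrt{b^*_i}\,\chi(x)$ with $\chi \in C_c^\infty$) on which the quartic interaction term in the energy
\[
E(u) = \tfrac12 \int_{\R^\sN} \sum_i |\nabla u_i|^2 \;-\; \tfrac14 \int_{\R^\sN} \sum_{ij} \beta_{ij} u_i^2 u_j^2
\]
is negative, and then extract a nontrivial critical point via a mountain-pass or Nehari-type scheme on a suitably chosen constraint manifold, using the subcriticality of the cubic nonlinearity in dimensions $N \leq 3$ to secure the necessary compactness. This last step is the crux of the proof: the non-copositivity of $B$ is precisely what makes the functional fail to be bounded below on natural normalisations and thereby opens the door to genuinely coupled, non-constant solutions.
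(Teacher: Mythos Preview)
Your reduction in the easy case $m=0$ (constant solutions when the quadratic form vanishes on some $b^* \in C^n_+ \setminus \{0\}$) is correct and matches the paper's preliminary reductions. The hard case $m<0$, however, contains two genuine gaps.

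\textbf{Compactness.} You propose to run a mountain-pass or Nehari scheme for the functional $E$ directly on $\R^\sN$, invoking ``subcriticality of the cubic nonlinearity in dimensions $N\le 3$'' for compactness. Subcriticality does \emph{not} give compactness on the whole space: the embedding $H^1(\R^\sN)\hookrightarrow L^4(\R^\sN)$ is never compact, and your functional (with no $L^2$ mass term) is not even well defined on $D^{1,2}(\R^\sN)$ for $N\le 2$. Palais--Smale sequences can lose mass at infinity, and the usual remedies (radial restriction, concentration--compactness) are not obviously compatible with a multi-component, sign-indefinite coupling. The paper sidesteps this entirely: it proves the corresponding \emph{Neumann} problem on a bounded domain $\Omega$ (where $H^1(\Omega)\hookrightarrow L^4(\Omega)$ is compact and Palais--Smale is routine), and then obtains an entire solution on $\R^\sN$ by reflecting across the faces of a cube. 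This reduction to a bounded domain is the missing idea in your outline.

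\textbf{Functional geometry.} Even on a bounded domain, the standard mountain-pass geometry is not available: the origin is a highly degenerate saddle point of $E$ (there is no potential term, so $E$ is quartic near zero and $0$ is not a local minimum). A Nehari-type manifold is likewise problematic because the interaction form is sign-indefinite. The paper instead sets up a \emph{linking} argument: it fixes a direction $d\in\innt(C^n_+)$ with $b(d^2)<0$, shows that $E$ is bounded below by a positive constant on the set $M_\lambda=\{u:\|u\|=\lambda,\ \int_\Omega u \in \R d\}$ for some $\lambda>0$, and then constructs, via a homotopy and a degree computation, a family of maps $\gamma:D\to\cH$ whose images must intersect $M_\lambda$. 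The minimax over this family produces a positive critical value. Your proposal identifies the correct variational signal (the negative quartic direction coming from $b^*$) but does not supply the minimax mechanism that converts it into a critical point.
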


In fact we will prove the following stronger existence result
for the Neumann problem corresponding to (\ref{eq:special-case}) in bounded domains, which immediately gives rise to Theorem~\ref{sec:exist-nonex-rsn-1}
by tiling $\R^\sN$ with cubes and reflecting solutions.

\begin{theorem}
\label{sec:exist-nonex-neumann}
Suppose that $\Omega \subset \R^{\sN}$ is a bounded domain, $N\leq 3$, and that
the matrix $B= (\beta_{ij})_{ij} \in S(n)$ is {\em not} strictly
copositive but satisfies $\beta_{ii} \ge 0$ for $i=1,\dots,n$.
Then the Neumann problem
\begin{equation}
  \label{eq:neumann-problem}
\left \{
  \begin{aligned}
  -& \Delta u_i = \sum_{j=1}^n \beta_{ij}u_j^2 u_i,\quad\;u_1,\dots,u_n \ge 0&&\qquad
\text{in $\Omega$},\\
&\frac{\partial u_1}{\partial \nu}=\frac{\partial u_2}{\partial \nu}=\dots=\frac{\partial u_n}{\partial \nu}=0 &&\qquad \text{on $\partial \Omega$},
  \end{aligned}
\right.
\end{equation}
admits a nontrivial solution.
\end{theorem}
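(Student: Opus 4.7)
The plan is to proceed variationally. On $H := (H^1(\Omega))^n$ I would consider the $C^1$ energy
\[
J(u) \;=\; \frac{1}{2}\sum_{i=1}^n\int_\Omega|\nabla u_i|^2\,dx \;-\; \frac{1}{4}\sum_{i,j=1}^n \beta_{ij}\int_\Omega u_i^2 u_j^2\,dx.
\]
After replacing $u_i^2$ by $(u_i^+)^2$ in the quartic term (so that testing the Euler--Lagrange equation against $u_i^-$, using $u_i^+u_i^-\equiv 0$ and the Neumann integration by parts, forces $u_i\ge0$), critical points of $J$ correspond to nonnegative solutions of \eqref{eq:neumann-problem}. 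Since $N\le 3$, the embedding $H^1(\Omega)\hookrightarrow L^4(\Omega)$ is compact, the quartic part of $J$ is sequentially weakly continuous, and the Palais--Smale condition holds on bounded sequences.

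The hypothesis enters through the quadratic form $\Phi(d):=\sum_{ij}\beta_{ij}d_id_j$ on the simplex $S:=\{d\in C^n_+:\sum_id_i=1\}$: let $d^*\in S$ attain $q_*:=\min_S\Phi\le 0$, and set $I:=\{i:d^*_i>0\}$. The KKT conditions yield $(Bd^*)_i=q_*$ for $i\in I$. Two easy subcases dispose of the problem by constant solutions. If $q_*=0$, then $\sum_j\beta_{ij}d^*_j=0$ for $i\in I$, and the constant vector $\bar u_i:=\sqrt{d^*_i}$ (for $i\in I$, zero otherwise) solves \eqref{eq:neumann-problem}, because $\bar u_i\sum_j\beta_{ij}\bar u_j^2=\bar u_i(Bd^*)_i=0$. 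If $q_*<0$ but $\Phi\le 0$ on all of $C^n_+$, then $\beta_{ii}=\Phi(e_i)\le 0$ forces $\beta_{ii}=0$ for every $i$, and $u\equiv(1,0,\ldots,0)$ trivially solves the system.

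The heart of the argument is the remaining case $q_*<0$ with $\Phi$ strictly indefinite on $C^n_+$, in which no constant solution need exist. Here I fix $d^+\in S$ with $\Phi(d^+)>0$; then along the constant ray $u=t\sqrt{d^+}$ (componentwise square root) one computes $J(u)=-\tfrac{|\Omega|}{4}t^4\Phi(d^+)\to-\infty$ as $t\to\infty$, while along $u=t\sqrt{d^*}$, $J(u)=-\tfrac{|\Omega|}{4}t^4q_*\to+\infty$. Decomposing $H=\R^n\oplus V_0$, where $V_0$ is the closed subspace of $H^1$-functions with vanishing mean on each component, Poincar\'e--Wirtinger combined with $H^1\hookrightarrow L^4$ yields $J(v)\ge c\|\nabla v\|_{L^2}^2$ for $v\in V_0$ of small norm. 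I would use these three facts to set up a Rabinowitz-type linking: a small sphere in $V_0$ (on which $J\ge\alpha>0$) links with a large ``box'' built along the unstable $\sqrt{d^+}$-direction, while the $\sqrt{d^*}$-direction keeps $J$ bounded below on the relevant wall. Standard min--max, together with the Palais--Smale condition, then produces a critical point at a strictly positive critical level, hence nontrivial and non-constant.

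\emph{Main obstacle.} The presence of constants in $H$ destroys the standard mountain-pass geometry at the origin: along any constant ray $t\sqrt{d^+}$ with $\Phi(d^+)>0$ the functional $J$ is already negative arbitrarily close to $0$, so no ring $\{\|u\|_{H^1}=r\}$ can separate $0$ from a ``mountain.'' Designing, in this genuinely indefinite regime, a linking configuration that both isolates a nontrivial critical point \emph{and} is compatible with the nonnegative cone $\{u_i\ge 0\}$ (so that the resulting solution is a bona fide nonnegative solution, not a sign-changing one) is the technical crux.
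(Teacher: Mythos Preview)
Your variational framework and the identification of the ``easy'' constant-solution cases are correct and parallel the paper's preliminary reductions. Two technical points deserve mention. First, in the Neumann setting, testing the Euler--Lagrange equation against $u_i^-$ only yields $\int_\Omega|\nabla u_i^-|^2=0$, so $u_i^-$ is \emph{constant}, not necessarily zero; the paper adds a penalty term $\frac{1}{2}\int_\Omega|u^-|^2$ to the functional precisely for this reason. Second, you only assert Palais--Smale on \emph{bounded} sequences, but boundedness of PS sequences is itself nontrivial here: the paper's proof of it uses the preliminary reductions (namely $Bc\neq 0$ on $C^n_+\setminus\{0\}$ and $b>0$ on $\partial C^n_+\setminus\{0\}$), without which an unbounded PS sequence could concentrate along a bad constant direction.

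The genuine gap, however, is the one you flag yourself: the linking is not constructed, and your sketch does not work as stated. A small sphere in $V_0$ does satisfy $J\ge\alpha>0$, but a Rabinowitz-type linking pairs this with the boundary of a finite-dimensional cylinder on which $J\le 0$; since the quartic form is indefinite on the constants $\R^n$ (indeed $J\to+\infty$ along your own ray $t\sqrt{d^*}$), no such boundary is available. The paper's resolution uses a completely different geometry. On one side it takes $M_\lambda=\{u\in\cH:\|u\|=\lambda,\ \cL u\in\R d\}$, where $d$ is a fixed interior direction with $b(d^2)<0$: for $u\in M_\lambda$, either the gradient part is large, or $u$ is close to a constant multiple of $d$ and then $-\phi(u)>0$ --- in either case $E(u)>0$. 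On the other side it maps the cone $C^n_+$ into $\cH$ via $h_c=(c_1\phi_1,\dots,c_n\phi_n)$ with the $\phi_i$ having \emph{pairwise disjoint supports}; this kills every off-diagonal term in the quartic part, leaving $\sum_i\beta_{ii}\,c_i^4\int_\Omega\phi_i^4>0$, so $E(h_c)\to-\infty$ as $|c|\to\infty$ regardless of the sign pattern of the $\beta_{ij}$. A homotopy from constants to $h_c$ (controlled on $\partial C^n_+$ thanks to the inductive reduction $b>0$ there) and a Brouwer-degree argument then force every admissible deformation of this map to intersect $M_\lambda$. The disjoint-support trick, which decouples the components and renders the quartic form definite on the image, is the key idea your proposal is missing.
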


We note that the assumption on the nonnegativity of the diagonal
elements of $B$ is crucial in Theorem~\ref{sec:exist-nonex-rsn-1}, which already can be seen by looking at
the equation $-\Delta u=-u^3$. By a classical result \cite{walter:55}, this
equation does not admit nontrivial nonnegative
solutions defined on all of $\R^\sN$. Next, we discuss
sufficient conditions for the nonexistence of nontrivial solutions of
(\ref{eq:special-case}). It was observed in \cite{dancer.wei.weth:10}
that, in the special two component case $n=2$, the strict copositivity
of $B \in S(2)$ is also a sufficient condition. This was proved as
follows.
Assuming by contradiction that
(\ref{eq:special-case}) admits a nonnegative nontrivial solution
$u=(u_1,u_2)$, it was shown that a suitable linear combination $w=\mu_1
u_1 + \mu_2 u_2$ is a positive solution of the differential inequality
$-\Delta w \ge w^3$, contrary to a result of Gidas \cite{gidas:80}. To
exploit the idea in the more general $n$-component case, we are lead to
introduce another notion of positivity of a symmetric matrix $B$.
\begin{definition}
\label{cubic-positivity}
We call a matrix $B \in S(n)$ {\em strictly cubically copositive} if there exists
$\mu \in C^n_+$ such that
\begin{equation}
  \label{eq:2}
\sum \limits_{i,j=1}^n \beta_{ij}c_j^2c_i  \mu_i> 0 \qquad \text{for all
$c \in C^n_+ \setminus \{0\}$}.
\end{equation}
\end{definition}

We briefly comment on this definition. By applying (\ref{eq:2}) to coordinate vectors, we see that
$\mu$ must have strictly positive components to satisfy this
condition.
Moreover, by homogeneity, there is a constant $\kappa=\kappa(B,\mu)>0$ such that
\begin{equation}
  \label{eq:8}
\sum_{i,j=1}^n \beta_{ij}c_j^2 c_i \mu_i \ge \kappa \Bigl( \sum_{i=1}^n \mu_i c_i
\Bigr)^3\qquad \text{for every $c \in C^n_{+}\setminus \{0\}$.}
\end{equation}
To explain the degree of freedom given by the choice of $\mu$,
we note that if $B$ satisfies
$$
\sum \limits_{i,j=1}^n \beta_{ij}c_j^2c_i  \mu_i> 0 \qquad \text{for all
$c \in C^n_+ \setminus \{0\}$}.
$$
then the matrix $\tilde B = \diag(\mu_1^2,\dots,\mu_n^2) B
\diag(\mu_1^2,\dots,\mu_n^2)$ with components $\tilde \beta_{ij}=\mu_i^2
\beta_{ij}\mu_j^2$
satisfies
$$
\sum \limits_{i,j=1}^n \tilde \beta_{ij}c_j^2c_i> 0 \qquad \text{for all
$c \in C^n_+ \setminus \{0\}$}.
$$

Our motivation to introduce this notion is given by
the following observation.

\begin{proposition}
\label{cubically-copositive-nonex}
Let $N\leq 3$. If $B \in S(n)$ is strictly cubically copositive, then
\eqref{eq:special-case} does not admit a nontrivial solution.
\end{proposition}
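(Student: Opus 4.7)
The plan is to mimic the two-component argument recalled in the paragraph above Definition~\ref{cubic-positivity}: reduce the system to a scalar superlinear differential inequality on $\R^\sN$ by taking a suitable linear combination of the components, and then invoke the classical Liouville-type theorem of Gidas \cite{gidas:80}. The point of Definition~\ref{cubic-positivity} is precisely to produce the weights that make this reduction work.

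Concretely, I would argue by contradiction. Suppose $u=(u_1,\dots,u_n)$ is a nontrivial solution of \eqref{eq:special-case}; by the strong maximum principle, every nontrivial component is strictly positive on $\R^\sN$. Let $\mu\in C^n_+$ be the vector given by Definition~\ref{cubic-positivity}; as remarked there, $\mu_i>0$ for every $i$. Set
$$
w(x):=\sum_{i=1}^n \mu_i u_i(x),
$$
so that $w>0$ on $\R^\sN$ (at least one $u_i$ is strictly positive). Summing the equations in \eqref{eq:special-case} against $\mu_i$ yields
$$
-\Delta w(x)=\sum_{i=1}^n \mu_i(-\Delta u_i)(x)=\sum_{i,j=1}^n \beta_{ij}\,u_j(x)^2 u_i(x)\,\mu_i.
$$
Since $u(x)\in C^n_+$ for each $x$, the pointwise inequality \eqref{eq:8} applied with $c=(u_1(x),\dots,u_n(x))$ gives
$$
-\Delta w(x)\ \geq\ \kappa\,\Bigl(\sum_{i=1}^n \mu_i u_i(x)\Bigr)^{\!3}\ =\ \kappa\,w(x)^3\qquad\text{on }\R^\sN.
$$
Rescaling $w$ by a positive constant absorbs $\kappa$, so $w$ is a positive classical solution of the differential inequality $-\Delta w\geq w^3$ on $\R^\sN$.

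Finally, since $N\leq 3$, the exponent $p=3$ satisfies $p\leq N/(N-2)$ (with the convention that the right-hand side equals $+\infty$ when $N\leq 2$), so the Liouville-type result of Gidas \cite{gidas:80} for positive supersolutions of $-\Delta w\geq w^p$ in the whole space applies and forces $w\equiv 0$, contradicting $w>0$. The main (indeed, only) subtle point is the initial observation that Definition~\ref{cubic-positivity} is tailored exactly so that the $\mu$-weighted sum of the equations produces a cube of the linear combination $w$, reducing the non-cooperative vectorial problem to a single scalar inequality to which the standard Liouville theorem can be applied; after \eqref{eq:8} the rest is routine.
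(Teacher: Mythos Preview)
Your argument is correct and is essentially the same as the paper's proof: both take the $\mu$-weighted linear combination $w=\sum_i \mu_i u_i$, use \eqref{eq:8} to obtain $-\Delta w\ge \kappa w^3$, absorb $\kappa$ by scaling, and conclude via Gidas's Liouville theorem. The only cosmetic difference is that the paper builds the rescaling into the definition by setting $v=\sqrt{\kappa}\sum_i \mu_i u_i$ from the outset.
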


Since the proof is very simple, we give it immediately.

\begin{proof}
Let $\mu \in C^n_+$ be as in the definition above.
We now suppose by contradiction that \eqref{eq:special-case} admits a nontrivial
solution $u=(u_1,\dots,u_n)$. Let $\kappa=  \kappa(B,\mu)>0$ satisfy
(\ref{eq:8}). Then the positive function $v:= \sqrt{\kappa} \sum \limits_{i=1}^n \mu_i u_i$ satisfies
$$
-\Delta v= \sqrt{\kappa} \sum_{i,j=1}^n \beta_{ij}u_j^2 \mu_i u_i \ge
\kappa^{\frac{3}{2}}\Bigl(\sum \limits_{i=1}^n \mu_i u_i\Bigr)^3 = v^3 \qquad
\text{in $\R^N$,}
$$
in contradiction, for $N\leq3$, with the aforementioned result of Gidas \cite{gidas:80}.
\end{proof}

It is natural to ask wether strict copositivity and strict cubical
copositivity are related in some way. This is answered by the
following proposition.

\begin{proposition}
\label{scc-implies-sc}
Let $B \in S(n)$.
\begin{enumerate}
\item If $B$ is strictly cubically copositive, then it is also
strictly copositive.
\item If $n=2$ and $B$ is strictly copositive, then it is also
  strictly cubically copositive.
\end{enumerate}
\end{proposition}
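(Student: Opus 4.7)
The proof splits into two independent arguments.

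Part (1) is most cleanly handled by contradiction, combining the existence and nonexistence results already in place. Suppose $B$ is strictly cubically copositive but not strictly copositive. Testing the defining inequality \eqref{eq:2} on each coordinate vector $e_k$ gives $\mu_k\beta_{kk}>0$; since $\mu_k>0$ (as noted just after Definition \ref{cubic-positivity}), this forces $\beta_{kk}>0$ for all $k$. The sign hypothesis of Theorem \ref{sec:exist-nonex-rsn-1} is therefore satisfied, and that theorem (applied in, say, dimension $N=1$) furnishes a nontrivial solution of \eqref{eq:special-case}, in direct contradiction with the conclusion of Proposition \ref{cubically-copositive-nonex}. The piquant feature is that a purely algebraic implication is derived through a PDE existence/nonexistence dichotomy.

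Part (2) calls for an explicit construction of a witnessing $\mu\in C^2_+$. The paragraph preceding the proposition records that searching for such $\mu$ is equivalent to finding a positive diagonal matrix $D$ whose congruent transform $\tilde B=DBD$ satisfies $\sum\tilde\beta_{ij}c_ic_j^2>0$ on $C^2_+\setminus\{0\}$; such a congruence preserves strict copositivity since $D$ acts as a bijection of $C^2_+$. The choice $\mu_i=\beta_{ii}^{-1/4}$ normalizes $\tilde\beta_{11}=\tilde\beta_{22}=1$ and $\tilde\beta_{12}=\gamma:=\beta_{12}/\sqrt{\beta_{11}\beta_{22}}$; by \eqref{eq:strictly copositive in dim n=2}, strict copositivity becomes exactly $\gamma>-1$. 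What remains is to verify that the form
\[
c_1^3+c_2^3+\gamma c_1 c_2(c_1+c_2)=(c_1+c_2)\bigl[c_1^2+(\gamma-1)c_1c_2+c_2^2\bigr]
\]
is strictly positive on $C^2_+\setminus\{0\}$. The first factor is positive for $c\neq 0$; for the second, the AM-GM bound $c_1^2+c_2^2\ge 2c_1c_2$ yields a lower bound of $(\gamma+1)c_1c_2\ge 0$, which is strictly positive whenever both components are positive, while the boundary cases $c_1=0$ and $c_2=0$ are trivial.

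No serious obstacle is foreseen. Part (1) is a packaging exercise whose only subtle point is the automatic positivity of the diagonal entries required by Theorem \ref{sec:exist-nonex-rsn-1}. Part (2) comes down to a pleasant algebraic identity; the main care is in executing the diagonal rescaling so that the equivalence between strict copositivity of $B$ and of $\tilde B$ is correctly transported.
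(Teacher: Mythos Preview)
Your argument for Part~(2) is essentially the paper's: both choose $\mu_i=\beta_{ii}^{-1/4}$, reduce to the normalized form with diagonal entries equal to $1$ and off-diagonal entry $\gamma=\beta_{12}/\sqrt{\beta_{11}\beta_{22}}>-1$, and then verify positivity of the resulting cubic. Your factorization $(c_1+c_2)\bigl[c_1^2+(\gamma-1)c_1c_2+c_2^2\bigr]$ is a slightly tidier way to finish than the paper's direct inequality, but the substance is identical.

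Part~(1), however, is handled quite differently. The paper gives a self-contained algebraic proof: it shows directly that $\tilde b(c):=b(c_1^2,\dots,c_n^2)>0$ on $C^n_+\setminus\{0\}$ by induction on the number of nonzero coordinates of $c$. The key observation is that the directional derivative of $\tilde b$ along the vector $\tilde\mu$ (the restriction of $\mu$ to the active coordinate set) equals $4\sum_{i,j}\beta_{ij}c_j^2c_i\mu_i>0$, so $\tilde b$ strictly increases along rays from lower-dimensional faces into the interior, and the induction closes. Your route instead invokes Theorem~\ref{sec:exist-nonex-rsn-1} and Proposition~\ref{cubically-copositive-nonex} to obtain a contradiction at the level of solutions of~\eqref{eq:special-case}. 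This is logically sound (neither of those results depends on Proposition~\ref{scc-implies-sc}, so there is no circularity, and you correctly note that $\beta_{ii}>0$ is automatic), and it is appealingly short. The trade-offs are clear: the paper's proof is elementary, intrinsic to linear algebra, and generalizes verbatim to the $(p-1)$-copositive setting of Section~\ref{sec:generalization}; your proof imports the full variational machinery of Section~\ref{sec:proof-exist-result-3} and Gidas's Liouville theorem to establish a matrix inequality, which is elegant but decidedly non-elementary.
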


By combining Theorem~\ref{sec:exist-nonex-rsn-1} with Propositions
\ref{cubically-copositive-nonex} and \ref{scc-implies-sc}, we
immediately get the following

\begin{corollary}
\label{coro:complete_result 1}
If $n=2$ and $\beta_{11},\beta_{22}$ are nonnegative, then the system
(\ref{eq:special-case}) admits a nontrivial solution if and only if
$B$ is not strictly copositive, i.e., if one of the strict
inequalities
$$
\beta_{11}>0,\quad \beta_{22}>0, \quad \beta_{12}>-\sqrt{\beta_{11}
  \beta_{22}}
$$
is not satisfied.
\end{corollary}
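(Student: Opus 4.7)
The plan is to assemble the corollary directly from the tools already established in the paper, and to match the abstract notion of strict copositivity with the explicit inequalities in the $n=2$ case.

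First I would handle the existence direction. Assuming that the three inequalities
\[
\beta_{11}>0,\qquad \beta_{22}>0,\qquad \beta_{12}>-\sqrt{\beta_{11}\beta_{22}}
\]
do not all hold simultaneously, the characterization \eqref{eq:strictly copositive in dim n=2} of strict copositivity in $S(2)$ tells us that $B$ is \emph{not} strictly copositive. Since we are assuming $\beta_{11},\beta_{22}\geq 0$, Theorem \ref{sec:exist-nonex-rsn-1} applies and produces a nontrivial solution of \eqref{eq:special-case}, which is what we want.

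For the nonexistence direction, suppose the three inequalities are all satisfied. Then \eqref{eq:strictly copositive in dim n=2} says that $B$ is strictly copositive. Here is where the dimensional restriction $n=2$ is crucial: by part (2) of Proposition \ref{scc-implies-sc}, strict copositivity in $S(2)$ upgrades to strict cubical copositivity. Proposition \ref{cubically-copositive-nonex} then rules out the existence of a nontrivial solution of \eqref{eq:special-case}, completing the equivalence.

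Since all three ingredients (Theorem \ref{sec:exist-nonex-rsn-1}, Proposition \ref{cubically-copositive-nonex}, and Proposition \ref{scc-implies-sc}) are available at this point, and since the explicit inequalities are precisely the content of \eqref{eq:strictly copositive in dim n=2}, the argument is essentially a chain of implications with no real obstacle. The only subtle point to flag is the asymmetric use of Proposition \ref{scc-implies-sc}: part (1) is not needed here, and part (2) is exactly the reason why the equivalence between existence and the failure of copositivity is clean in the two-component case but would require additional work in higher dimensions (where strict copositivity is generally weaker than strict cubical copositivity).
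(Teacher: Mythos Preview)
Your proof is correct and matches the paper's own argument: the paper states that the corollary follows immediately by combining Theorem~\ref{sec:exist-nonex-rsn-1} with Propositions~\ref{cubically-copositive-nonex} and~\ref{scc-implies-sc}, which is precisely the chain of implications you assemble. Your observation that only part (2) of Proposition~\ref{scc-implies-sc} is needed, and that this is where the restriction $n=2$ enters, is also accurate.
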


The equivalence of strict copositivity and strict cubic copositivity
stated in Proposition~\ref{scc-implies-sc} for $n=2$ fails to be true if $n \ge 3$. Indeed, for $\eps>0$ small, the following matrix is strictly
copositive but not strictly cubically positive.
\begin{equation}
  \label{eq:4}
B_\eps= \left (
\begin{array}{ccc}
1&-1+\eps&-1+\eps\\
-1+\eps &1&1\\
-1+\eps &1&1
\end{array}
\right), \qquad \eps>0.
\end{equation}
The strict copositivity follows directly from \eqref{eq:strictly
  copositive in dim n=3}, but it is not at all obvious that
$B_\eps$ is not strictly cubically positive. We postpone the proof
of this fact to the Appendix.

In the multicomponent case $n \ge 3$, the results presented so far still leave a gap between necessary
and sufficient conditions for the nonexistence of solutions
of (\ref{eq:special-case}). Somewhat surprisingly, we can close this gap
in case $N\leq 2$ but not in the threedimensional case.

\begin{theorem}
\label{planar-case}
If $N\leq 2$, $n \ge 2$ and $B\in S(n)$ is strictly copositive, then \eqref{eq:special-case} does not admit a nontrivial solution.
\end{theorem}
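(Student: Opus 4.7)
The plan is to combine a Hardy-type (Allegretto--Piepenbrink) inequality for each equation with an algebraic consequence of strict copositivity, exploiting that in dimensions $N\leq 2$ the whole space $\R^N$ can be exhausted by cutoff functions of arbitrarily small Dirichlet energy.

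I would first establish the following purely algebraic fact: if $B\in S(n)$ is strictly copositive, then there exists $\mu\in C^n_+\setminus\{0\}$ with $(B\mu)_j>0$ for every $j=1,\dots,n$. The image $K:=B(C^n_+)$ is a finitely generated, hence closed, convex cone, so if $K\cap\innt(C^n_+)=\emptyset$ the Hahn--Banach separation theorem furnishes $y\in C^n_+\setminus\{0\}$ with $\langle y,B\mu\rangle\leq 0$ for every $\mu\in C^n_+$. By the symmetry of $B$ this forces $(By)_i\leq 0$ for every $i$, and then $\sum_{i,j}\beta_{ij}y_iy_j=\langle y,By\rangle\leq 0$, contradicting strict copositivity.

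Suppose by contradiction that $u=(u_1,\dots,u_n)$ is a nontrivial entire solution, and set $I=\{i\colon u_i\not\equiv 0\}$, so that $u_i>0$ on $\R^N$ for each $i\in I$ by the strong maximum principle. The submatrix $B_{II}=(\beta_{ij})_{i,j\in I}$ inherits strict copositivity (extend test vectors by zero outside $I$), so the algebraic claim produces weights $\mu_i>0$ ($i\in I$) and a constant $c>0$ with $\sum_{i\in I}\mu_i\beta_{ij}\geq c$ for every $j\in I$. For each $i\in I$ and each $\phi\in C_c^\infty(\R^N)$, testing $-\Delta u_i=V_iu_i$ (with $V_i:=\sum_j\beta_{ij}u_j^2$) against $\phi^2/u_i$ and applying Young's inequality to the cross term $2\phi\nabla\phi\cdot\nabla\log u_i$ yields the classical Hardy estimate $\int_{\R^N}V_i\phi^2\,dx\leq\int_{\R^N}|\nabla\phi|^2\,dx$. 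Multiplying by $\mu_i$, summing over $i\in I$ and swapping the two summations gives
\[
c\int_{\R^N}\Bigl(\sum_{j\in I}u_j^2\Bigr)\phi^2\,dx\leq\Bigl(\sum_{i\in I}\mu_i\Bigr)\int_{\R^N}|\nabla\phi|^2\,dx.
\]

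Since $N\leq 2$ one can choose cutoffs $\phi_R\in C_c^\infty(\R^N)$ with $0\leq\phi_R\leq 1$, $\phi_R\to 1$ pointwise, and $\int|\nabla\phi_R|^2\,dx\to 0$ as $R\to\infty$ (the logarithmic cutoff in $\R^2$, a linear plateau cutoff in $\R^1$). Taking $\phi=\phi_R$ and letting $R\to\infty$ via monotone convergence forces $\sum_{j\in I}u_j^2\equiv 0$, contradicting $I\neq\emptyset$. The pivotal step is the algebraic lemma in the second paragraph: identifying the correct dual statement to strict copositivity and arranging the separation argument. It converts the quadratic-form positivity of $B$ into a linear lower bound on $\sum_i\mu_i\beta_{ij}$ that plugs directly into the Hardy inequality; once this lemma is at hand, the remainder is a standard assembly of Schr\"odinger-operator estimates and the vanishing $W^{1,2}$-capacity of points in $\R^N$ for $N\leq 2$.
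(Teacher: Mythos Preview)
Your proof is correct and shares with the paper the same analytic backbone: test each equation against $\phi^2/u_i$ to obtain the Hardy-type inequality $\sum_j\beta_{ij}\int u_j^2\phi^2\,dx\le\int|\nabla\phi|^2\,dx$, then exploit that in $N\le2$ one can send $\int|\nabla\phi_R|^2\to0$ via the logarithmic cutoff. The genuine difference lies in the algebraic step. You first prove, via a Gordan/Stiemke-type separation argument, that strict copositivity yields a fixed vector $\mu\in C^n_+\setminus\{0\}$ with $B\mu>0$ componentwise, and then take a $\mu$-weighted linear combination of the Hardy inequalities. The paper bypasses this lemma entirely: it multiplies the $i$-th Hardy inequality by $c_i(R):=\int u_i^2\phi_R^2\,dx$, sums, and applies strict copositivity directly to the quadratic form evaluated at $(c_1(R),\dots,c_n(R))$, obtaining $\kappa\sum_i c_i^2(R)\le\sum_{i,j}\beta_{ij}c_i(R)c_j(R)\le o(1)\sum_i c_i(R)$. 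The paper's route is thus more elementary---no separation theorem is needed, and copositivity is used in its raw quadratic-form guise---while your route isolates an algebraic consequence of copositivity that is of independent interest and yields a slightly cleaner final inequality. (A minor point: your lemma as stated gives $\mu\in C^n_+\setminus\{0\}$, not automatically $\mu_i>0$ for all $i$; this is harmless for the argument, and in any case a small perturbation $\mu+t\mathbf{1}$ pushes $\mu$ into the interior.)
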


By combining Theorems \ref{sec:exist-nonex-rsn-1} and \ref{planar-case} we obtain the following result.

\begin{corollary}\label{coro:complete_result 2}
Let $N\leq 2$ and let $B\in S(n)$, $n\geq 2$, be such that $\beta_{ii}\geq 0$. Then the system \eqref{eq:special-case} admits a nontrivial solution if and only if $B$ is not strictly copositive.
\end{corollary}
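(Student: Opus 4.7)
The plan is to obtain this corollary as a direct combination of the two main theorems already proved, with no additional argument required. Since the statement is an equivalence, I would treat the two implications separately.

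For the ``if'' direction I would assume that $B$ is not strictly copositive. The standing hypotheses $\beta_{ii}\geq 0$ and $N\leq 2\leq 3$ place us precisely in the setting of Theorem~\ref{sec:exist-nonex-rsn-1}, which immediately produces a nontrivial solution of \eqref{eq:special-case}. I would emphasize that this implication is in fact valid for every $N\leq 3$; the dimensional restriction $N\leq 2$ enters only through the reverse implication.

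For the ``only if'' direction I would argue by contrapositive. Assuming that $B$ is strictly copositive, I apply Theorem~\ref{planar-case}, whose hypotheses $N\leq 2$ and $n\geq 2$ match exactly those of the corollary, to conclude that \eqref{eq:special-case} admits no nontrivial solution. Equivalently, the existence of a nontrivial solution forces $B$ to fail strict copositivity, which is what we needed.

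Chaining these two implications gives the claimed equivalence. There is no genuine obstacle at the level of this corollary itself: all the substantive work is concentrated in Theorem~\ref{planar-case}, where the restriction $N\leq 2$ plays the decisive role and yields the sharper conclusion that, as noted after Proposition~\ref{scc-implies-sc}, is known to fail in dimension $N=3$. Thus the proof is a one-line combination, and the role of the corollary is simply to record the complete characterization made possible by juxtaposing the two theorems.
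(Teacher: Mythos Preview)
Your proof is correct and matches the paper's approach exactly: the corollary is stated there as an immediate combination of Theorems~\ref{sec:exist-nonex-rsn-1} and~\ref{planar-case}, just as you do. One small inaccuracy in your closing commentary: what is noted after Proposition~\ref{scc-implies-sc} is that strict copositivity and strict cubic copositivity differ for $n\geq 3$ \emph{components}, whereas the question of whether strict copositivity suffices for nonexistence in spatial dimension $N=3$ is explicitly left open (and conjectured to hold), not ``known to fail.''
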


The proof of Theorem \ref{planar-case} relies on a test function argument which does
not extend to the three-dimensional case. Hence in the case $N \ge 3$,
$n \ge 3$ it is still open whether nonexistence of nontrivial
solutions follows from weaker assumptions than strict cubic
copositivity of $B$. We conjecture that, as in the case $N \le 2$, strict
copositivity is sufficient. Since we are not able to prove this, we add a
simple condition on the coefficients of $B$ which guarantees strict
cubic copositivity and therefore nonexistence of solutions of (\ref{eq:special-case}).

\begin{proposition}
\label{sec:introduction--1}
Suppose that
$$
\beta_{ii}> 0 \qquad \text{and}\qquad \sum_{{j=1}\atop {j
    \not=i}}^n\beta_{ij}^- >  -\beta_{ii} \qquad \text{for
  $i=1,\dots,n$,}
$$where $\beta_{ij}^-=\min\{\beta_{ij},0\}$.
Then $B$ is strictly cubically copositive, and therefore
\eqref{eq:special-case} does not admit a nontrivial
solution by Proposition~\ref{cubically-copositive-nonex}.
\end{proposition}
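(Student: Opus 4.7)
The plan is to verify strict cubic copositivity directly with the simplest possible choice of weights, namely $\mu=(1,1,\dots,1)\in C^n_+$, and then invoke Proposition~\ref{cubically-copositive-nonex} for the nonexistence part. With this choice, what must be shown is that
\[
Q(c):=\sum_{i,j=1}^n \beta_{ij}c_j^2 c_i > 0 \qquad \text{for every } c\in C^n_+\setminus\{0\}.
\]

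First I would separate $Q$ into its diagonal and off-diagonal contributions:
\[
Q(c)=\sum_{i=1}^n \beta_{ii}c_i^3 + \sum_{i\neq j} \beta_{ij} c_i c_j^2,
\]
and then split each off-diagonal $\beta_{ij}$ as $\beta_{ij}=\beta_{ij}^++\beta_{ij}^-$. The positive parts contribute nonnegatively because $c\in C^n_+$. To control the negative part, I would apply the AM-GM inequality $c_i c_j^2 \leq \tfrac{1}{3}c_i^3 + \tfrac{2}{3}c_j^3$ with the sign reversed (since $\beta_{ij}^-\leq 0$), yielding
\[
\sum_{i\neq j}\beta_{ij}^- c_i c_j^2 \;\geq\; \frac{1}{3}\sum_{i\neq j}\beta_{ij}^- c_i^3 + \frac{2}{3}\sum_{i\neq j}\beta_{ij}^- c_j^3.
\]

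The key algebraic step is to exploit the symmetry $\beta_{ij}^-=\beta_{ji}^-$: relabeling $i\leftrightarrow j$ in the second sum shows that both sums on the right equal $\sum_{i\neq j}\beta_{ij}^- c_i^3$, so they combine to $\sum_i c_i^3 \sum_{j\neq i}\beta_{ij}^-$. Substituting back yields
\[
Q(c)\;\geq\; \sum_{i=1}^n c_i^3\Bigl(\beta_{ii}+\sum_{j\neq i}\beta_{ij}^-\Bigr),
\]
and the hypothesis makes each bracketed coefficient strictly positive, giving $Q(c)>0$ whenever $c\neq 0$.

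There is no real obstacle here — the whole argument is a clean AM-GM estimate combined with the symmetry of $B$. The only subtle point is making sure the symmetry is used correctly when collapsing the two sums coming from AM-GM into a single diagonal expression; once that is done, the diagonal dominance hypothesis yields strict positivity immediately, and nonexistence of nontrivial solutions of \eqref{eq:special-case} follows from Proposition~\ref{cubically-copositive-nonex}.
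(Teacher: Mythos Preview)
Your proof is correct and follows essentially the same route as the paper: choose $\mu=(1,\dots,1)$, drop the positive off-diagonal parts, bound the negative ones by a cubic mean inequality, and use the symmetry of $B$ to arrive at $Q(c)\ge \sum_i c_i^3\bigl(\beta_{ii}+\sum_{j\neq i}\beta_{ij}^-\bigr)$. The only cosmetic difference is that the paper first symmetrizes $\sum_{i\neq j}\beta_{ij}^- c_i c_j^2=\tfrac12\sum_{i\neq j}\beta_{ij}^-(c_i c_j^2+c_i^2 c_j)$ and then applies $s^2t+st^2\le s^3+t^3$, whereas you apply the weighted AM--GM $c_i c_j^2\le \tfrac13 c_i^3+\tfrac23 c_j^3$ termwise and symmetrize afterward; both reductions land on the same lower bound.
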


The paper is organized as
follows. In Section~\ref{sec:proof-exist-result-3} we will consider
the Neumann problem (\ref{eq:neumann-problem}), and we will give the
proof of Theorem~\ref{sec:exist-nonex-neumann}. The solution is found
by variational methods. More precisely, we will consider a
$\cC^1$-functional $E$ such that critical points of $E$ are precisely
(weak) solutions of (\ref{eq:special-case}).
Moreover, we will use the assumption
that $B$ is not strictly copositive to set up a suitable minimax
principle which eventually gives rise to a nontrivial critical point
of $E$. The difficulty in analyzing the functional geometry of $E$ is
the fact that zero is not a minimum but a highly degenerate critical (saddle)
point of $E$.

In Section~\ref{sec:matr-cond-nonex} we will give the proof of our
other results presented above which are concerned with matrix
properties and nonexistence of solutions of (\ref{eq:special-case}).
Afterwards, in Section \ref{sec:generalization} we will add some extensions of
our results to the more general system
\begin{equation}
  \label{generalization}
-\Delta u_i = \sum_{j=1}^n \beta_{ij}u_j^\frac{p}{2} u_i^{\frac{p}{2}-1},\qquad
u_1,\dots,u_n \ge 0 \qquad \text{in $\R^\sN$,}
\end{equation}
where $2<p<2^*=2N/(N-2)$ if $N\geq 3$ and $2<p<\infty$ if $N \in
\{1,2\}$.

Finally, in the Appendix we will give the proof that the matrix $B_\varepsilon$ defined in \eqref{eq:4} is not strictly cubically positive for sufficiently small $\varepsilon>0$.

\section{Proof of the existence result}
\label{sec:proof-exist-result-3}

This section is devoted to the proof of Theorem
\ref{sec:exist-nonex-neumann}. From now on we assume that the
matrix $B= (\beta_{ij})_{ij}$ is {\em not} strictly copositive, but
$\beta_{ii} \ge 0$ for $i=1,\dots,n$. We wish to show that, in this
case, (\ref{eq:neumann-problem}) admits a nontrivial solution. Without loss of generality, we may from now on assume that
\begin{equation}
  \label{eq:without-loss-1}
B c \not= 0\qquad \text{for every $c \in
C^n_+ \setminus \{0\}$.}
\end{equation}
Indeed, if there is $c \in
C^n_+ \setminus \{0\}$ with $Bc=0$, then the constant vector
\mbox{$u \equiv (\sqrt{c_1},\dots,\sqrt{c_n})$} is a nontrivial
solution of \eqref{eq:special-case}, and the assertion holds.\\
Moreover, we may also assume that
\begin{equation}
  \label{eq:beta-ii-positive}
\beta_{ii}>0\qquad \text{for $i=1,\dots,n$,}
\end{equation}
otherwise the $i$-th coordinate vector $e_i$ is a constant nontrivial
solution of \eqref{eq:neumann-problem}. Next, we consider
$$
\partial C^n_+ := \{x \in C^n_+ \::\: x_i= 0\quad \text{for some
  $i$} \}.
$$
Arguing by induction on $n$, we may from now on assume that
\begin{equation}
  \label{eq:boundary-positive}
\sum_{i,j=1}^n \beta_{ij} c_i c_j >0 \qquad \text{for all $c \in \partial C^n_+  \setminus \{0\}$}.
\end{equation}
Indeed, if $n=2$, then \eqref{eq:beta-ii-positive} assures that \eqref{eq:boundary-positive} holds. On the other hand,
if $n \ge 3$ and $\sum \limits_{i,j=1}^n \beta_{ij} c_i c_j \le 0$ for some $c \in
\partial C^n_+$ with, say, $c_k=0$, then we may eliminate the $k$--th column and the $k$--th row from $B$ and obtain
a matrix $\tilde B= (\tilde \beta_{ij})_{ij} \in \R^{{(n-1)} \times
  {(n-1)}}$ which is not strictly copositive. By induction, we then
get a nontrivial solution $v: \Omega \to \R^{n-1}$ of the reduced system
\begin{equation}
  \label{eq:5}
\left \{
  \begin{aligned}
  -\Delta v_i&= \sum_{j=1}^{n-1}\tilde \beta_{ij} v_j^2
v_i,\quad v_1,
\dots, v_{n-1} \ge 0 &&\qquad \text{in $\Omega$,}\\
\frac{\partial v_1}{\partial \nu}&=\frac{\partial v_2}{\partial \nu}=\dots=\frac{\partial v_{n-1}}{\partial \nu}=0 &&\qquad \text{on $\partial \Omega$},
  \end{aligned}
\right.
\end{equation}
Then a nontrivial solution of the original problem (\ref{eq:neumann-problem}) is given by
$$
u: \Omega \to \R^n,\qquad u=(v_1,\dots,v_{k-1},0,v_k,\dots,v_{n-1}).
$$

We need to introduce some more notation. We consider the Hilbert space $\cH:= H^1(\Omega,\R^n)$, endowed with
the norm
$$
\|u\|^2:= \int_\Omega(|\nabla u|^2 +u^2)\:dx \qquad \text{for $u=(u_1,\dots,u_n) \in \cH$}.
$$
Here and in the following we use the notation
$$
|u|^2= \sum_{i=1}^n
u_i^2, \qquad
|u^-|^2= \sum_{i=1}^n (u_i^-)^2 \qquad \text{and}\qquad |\nabla u|^2= \sum_{i=1}^n |\nabla u_i|^2.
$$

\begin{lemma}
\label{sec:proof-exist-result}
Consider
$$
E: \cH \to \R, \qquad E(u)= \frac{1}{2}\int_\Omega \bigl(|\nabla
u|^2+|u^-|^2
\bigr)\:dx -\phi(u),
$$
where
$$
\phi(u):=\frac{1}{4} \sum_{i,j=1}^n \int_\Omega \beta_{ij}
(u_i^+)^2 (u_j^+)^2  \:dx \qquad \text{for
  $u \in \cH$}
$$
Then we have:
\begin{enumerate}
\item[(i)] $E$ is a $\cC^1$-functional, and critical points of $E$
are nonnegative solutions of \eqref{eq:neumann-problem}.
\item[(ii)] $E$ satisfies the Palais-Smale condition.
\end{enumerate}
\end{lemma}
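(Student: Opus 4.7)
For part (i), the plan is a standard verification. Regularity of $E$ follows from the facts that $s\mapsto\tfrac12(s^-)^2$ is $\cC^1$ with derivative $-s^-$ and that the Sobolev embedding $\cH\hookrightarrow L^4(\Omega,\R^n)$ (available since $N\leq 3$) makes $\phi$ a $\cC^1$ functional, with
\[
\langle E'(u),v\rangle = \sum_{i}\int_\Omega\nabla u_i\cdot\nabla v_i\,dx - \sum_i\int_\Omega u_i^-\, v_i\,dx - \sum_{i,j}\int_\Omega\beta_{ij}(u_j^+)^2u_i^+ v_i\,dx.
\]
The resulting Euler--Lagrange equation reads $-\Delta u_i - u_i^- = \sum_j\beta_{ij}(u_j^+)^2u_i^+$ with Neumann boundary data. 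To conclude that critical points are nonnegative, one tests against the vector whose $i$-th component is $-u_i^-$ and whose others vanish, and exploits $\nabla u_i\cdot\nabla u_i^-=-|\nabla u_i^-|^2$ together with $u_i^+u_i^-\equiv 0$ to obtain $\|u_i^-\|_{H^1(\Omega)}^2=0$; hence $u_i\geq 0$ and the equation reduces to \eqref{eq:neumann-problem}.

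For part (ii), the plan is to first prove boundedness of an arbitrary Palais--Smale sequence $(u_k)\subset\cH$ at a level $c$, and then to conclude strong convergence by compactness. Combining $E(u_k)=c+o(1)$ with $\langle E'(u_k),u_k\rangle=o(\|u_k\|)$ yields
\[
\int_\Omega(|\nabla u_k|^2+|u_k^-|^2)\,dx = 4c + o(1)+o(\|u_k\|),
\]
so the gradient and the negative part are controlled modulo $o(\|u_k\|)$. Arguing by contradiction, suppose $\|u_k\|\to\infty$ and set $v_k:=u_k/\|u_k\|$. Then $\int(|\nabla v_k|^2+|v_k^-|^2)\to 0$, and compactness of $\cH\hookrightarrow L^2(\Omega,\R^n)$ together with $\nabla v_k\to 0$ in $L^2$ gives, along a subsequence, $v_k\to v$ strongly in $\cH$, with $v$ a nonzero constant vector in $C^n_+$.

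The decisive step is then to test $E'(u_k)$ against an arbitrary constant vector $\mu\in\R^n$ (viewed as an element of $\cH$):
\[
-\sum_i\mu_i\int_\Omega u_{k,i}^-\,dx \;-\;\sum_{i,j}\mu_i\beta_{ij}\int_\Omega(u_{k,j}^+)^2u_{k,i}^+\,dx = o(1).
\]
After dividing by $\|u_k\|^3$, the first sum is $O(\|u_k\|^{-5/2})$ (since $\|u_{k,i}^-\|_{L^1}=O(\sqrt{\|u_k\|})$) while the second converges to $|\Omega|\sum_{i,j}\mu_i\beta_{ij}v_iv_j^2$. Arbitrariness of $\mu$ forces $v_i(Bv^{(2)})_i=0$ for every $i$, where $v^{(2)}:=(v_1^2,\dots,v_n^2)$. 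If $v_i>0$ for every $i$ then $Bv^{(2)}=0$ with $v^{(2)}\in C^n_+\setminus\{0\}$, contradicting \eqref{eq:without-loss-1}; otherwise $v^{(2)}\in\partial C^n_+\setminus\{0\}$ and $\langle Bv^{(2)},v^{(2)}\rangle=\sum_i v_i^2(Bv^{(2)})_i=0$ contradicts \eqref{eq:boundary-positive}. Either way $(u_k)$ must be bounded.

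Once boundedness is secured, the rest is classical: up to a subsequence $u_k\rightharpoonup u$ weakly in $\cH$, and by compactness of $\cH\hookrightarrow L^4(\Omega,\R^n)$ for $N\leq 3$ one has $u_k\to u$ strongly in $L^4$; then $\phi'(u_k)\to\phi'(u)$ in $\cH^*$, and expanding $\langle E'(u_k),u_k-u\rangle=o(1)$ together with the $L^2$-convergence of $u_k^-$ yields $u_k\to u$ in $\cH$. The main obstacle is precisely the boundedness step: since $E$ is sign-indefinite with a degenerate saddle at the origin and constants lie in the kernel of the principal quadratic form, the Nehari-type identity alone cannot control the positive part of $u_k$; it is exactly the preliminary reductions \eqref{eq:without-loss-1} and \eqref{eq:boundary-positive}---which encode the failure of strict copositivity of $B$ on $C^n_+\setminus\{0\}$ and its compensatory positivity on $\partial C^n_+\setminus\{0\}$ respectively---that allow constant test functions to close the contradiction.
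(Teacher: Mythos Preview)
Your proof is correct and follows essentially the same route as the paper. For (i) the arguments coincide; for (ii) both proofs derive the relation $\int(|\nabla u_k|^2+|u_k^-|^2)\,dx\le C+o(\|u_k\|)$, rescale, and obtain a nonzero constant limit $v\in C^n_+$ satisfying $v_i(Bv^{(2)})_i=0$ for all $i$, then invoke \eqref{eq:without-loss-1} and \eqref{eq:boundary-positive} exactly as you do---the only cosmetic differences are that the paper tests componentwise against arbitrary $\phi\in H^1(\Omega)$ rather than constant vectors, and concludes strong convergence via the compact operator decomposition $\nabla E=\id-A$ rather than by expanding $\langle E'(u_k),u_k-u\rangle$.
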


\begin{proof}
i) The fact that $E$ is of class $\cC^1$ follows from standard arguments
in the calculus of variations, using the Sobolev embeddings $\cH
\hookrightarrow L^2(\Omega)$ and $\cH
\hookrightarrow L^4(\Omega)$.  If $u \in \cH$ is a critical point of
$E$, then $u$ satisfies
$$
\int_\Omega (\nabla u_i \cdot \nabla \phi+ u_i^- \phi)\,dx = \sum_{j=1}^n
\int_\Omega  \beta_{ij} (u_j^+)^2 u_i^+ \phi\,dx
$$
for $\phi \in H^1(\Omega)$ and $i=1,\dots,n$. By choosing $\phi= u_i^-$, we obtain
$$
\int_\Omega (|\nabla u_i^-|^2+|u_i^-|^2)\:dx = 0 \qquad \text{for $i=1,\dots,n$.}
$$
This implies that $u_i^-\equiv 0$ for $i=1,\dots,n$, and hence
$u=(u_1,\dots,u_n)$ is a solution of \eqref{eq:neumann-problem}.\\
ii) Let $(u_k)_k \subset \cH$ be a sequence such that
$E(u_k)$ remains bounded and $E'(u_k) \to 0$ in the dual space $\cH'$.
Then
\begin{align*}
{\rm o}(\|u_k\|)&= E'(u_k)u_k =
\int_{\Omega} \bigl(|\nabla u_k|^2+|u_k^-|^2\bigr)dx -\sum_{i,j=1}^n \int_\Omega \beta_{ij} ((u_k^i)^+)^2 ((u_k^j)^+)^2 \:dx\\
&=4E(u_k)-\int_{\Omega}\bigl(|\nabla u_k|^2+|u_k^-|^2\bigr)dx,
\end{align*}
and hence
\begin{equation}\label{eq:int-of-|nabla-un|^2-is-bounded}
\int_{\Omega}\bigl(|\nabla u_k|^2+|u_k^-|^2\bigr)dx \le C +{\rm o}(\|u_k\|) \qquad \text{ as }k\to +\infty,
\end{equation}
for some constant $C>0$. We now suppose by contradiction that
$(u_k)_k$ is unbounded in $\cH$, hence $\|u_k\| \to \infty$ up to a
subsequence. Define $v_k:= \frac{u_k}{\|u_k\|}$. Then $\|v_k\|=1$ for
all $k$ and $\lim \limits_{k \to \infty}\int_{\Omega}\bigl(|\nabla
v_k|^2+(v_k^-)^2\bigr)dx = 0$, so we may pass to a subsequence such
that $v_k \to v$ in $\cH$, where $v \not=0$ is a constant vector with $v_i \ge 0$ for
$i=1,\dots,n$. Moreover, for arbitrary $\phi \in H^1(\Omega)$ and
$i=1,\dots,n$ we have
\begin{multline*}
\sum_{j=1}^n \beta_{ij} v_j^2 v_i \int_{\Omega}\phi\:dx = \sum_{j=1}^n \int_\Omega \beta_{ij} (v_j^+)^2(v_i^+) \varphi\, dx=\\
= \lim_{k \to \infty} \|u_k\|^{-3}\sum_{j=1}^n \int_{\Omega} \beta_{ij} ((u_k^j)^+)^2 (u_k^i)^+  \phi\:dx =\\
=\lim_{k \to \infty} \|u_k\|^{-3}\Bigl(\partial_i
E(u_k)\phi-\int_{\Omega} \bigl(\nabla u_k^i \cdot \nabla \phi+ (u_k^i)^-
\,\phi\bigr)
\:dx \Bigr)= 0.
\end{multline*}
Consequently,
$$
\sum_{j=1}^n \beta_{ij} v_j^2 v_i = 0  \qquad \text{for
  $i=1,\dots,n$.}
$$
If $v \in \partial C^n_+ \setminus \{0\}$, this obviously contradicts
\eqref{eq:boundary-positive}. On the other hand, if $v \in
\innt(C^n_+)$, then $\sum_{j=1}^n \beta_{ij} v_j^2 = 0$ for
 $i=1,\dots,n$, contradicting \eqref{eq:without-loss-1}.\\
We therefore conclude that $\|u_n\|$ is bounded. Next, we note that
$\nabla E(u_n)= u_n -A(u_n)$ with
$$
A: \cH \to \cH,\qquad  A w = (-\Delta + 1)^{-1}
\left (
\begin{array}{c}
w_1^+ +\sum \limits_{j=1}^n \beta_{1j} (w_j^+)^2 w_1^+  \\
\vdots\\
w_n^+ +\sum \limits_{j=1}^n \beta_{nj} (w_j^+)^2 w_n^+
\end{array}
\right),
$$
i.e., the $i$-th component $(Aw)_i$ of $Aw$ is uniquely given by
$$
\int_{\Omega}\Bigl( \nabla (Aw)_i \cdot \nabla \phi + (Aw)_i\, \phi\Bigr)\:dx
= \int_{\Omega} \Big( w_i^+ \phi +\sum \limits_{j=1}^n \beta_{ij} (w_j^+)^2 w_i^+ \phi  \Big)
\:dx,
$$
for all $\phi \in H^1(\Omega)$. By the compactness
of the embeddings $\cH \hookrightarrow L^3(\Omega,\R^n)$ and $\cH \hookrightarrow
L^1(\Omega,\R^n)$, we see that $A$ is also a compact operator. Hence
we may pass to a subsequence of $(u_k)_k$ such that $A(u_k) \to \bar u$ in $\cH$. But then
also
$$
\lim_{k \to \infty}(u_k-\bar u)= \lim_{k \to
  \infty}(u_k-A(u_k))=\lim_{k \to \infty} \nabla E(u_k)= 0.
$$
Hence $u_k \to \bar u$ strongly in $\cH$, which was claimed.
\end{proof}

In order to prove Theorem \ref{sec:exist-nonex-neumann}, our goal is
to set up a minimax principle which gives rise to a positive
critical value of $E$. For this we need some preparations. We let $b: \R^n
\to \R$ denote the quadratic form associated with $B$, i.e.,
\begin{equation}
  \label{eq:9}
b(c)= \sum \limits_{i,j=1}^n \beta_{ij}\, c_i c_j \qquad \text{for $c
  \in \R^n$.}
\end{equation}
By the assumption that $B$ is not strictly copositive and by (\ref{eq:boundary-positive}), there exists $d:=(d_1,\dots,d_k) \in \innt(C^n_+)$ such that
$b(d) \le 0$. In fact, we can find $d \in \innt(C^n_+)$ such that
$b(d)<0$, since otherwise $\min \limits_{C^n_+}b=0$ would be
attained at a point $c \in \innt(C^n_+)$ satisfying
$0=\nabla b(c)= 2 B c$, contradicting \eqref{eq:without-loss-1}.\\
From now on we fix $d \in \innt(C^n_+)$ such that $b(d^2)<0$, where $d^2:=(d_1^2,\ldots,d_n^2)$. We define the linear map
\begin{equation}
  \label{eq:10}
\cL: \cH \to \R^n, \qquad  \cL u = \Bigl(\int_\Omega u_1
\,dx,\dots,\int_\Omega u_n \,dx \Bigr)
\end{equation}
and, for $\lambda>0$, the sets
$$
M_\lambda := \{u \in \cH\::\: \|u\|=\lambda,\: \cL u \in \R d\}.
$$

\begin{lemma}
  \label{sec:proof-exist-result-1}
There exists $\lambda>0$ such that
\begin{equation}
  \label{eq:estimate-linking-final}
\sigma_\lambda:= \inf_{M_\lambda}E>0.
\end{equation}
\end{lemma}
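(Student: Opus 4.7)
I argue by contradiction. Suppose that $\sigma_\lambda \le 0$ for every $\lambda > 0$. For each $\lambda > 0$ choose $u^\lambda \in M_\lambda$ with $E(u^\lambda) \le \lambda^5$ (possible since $\sigma_\lambda \le 0 < \lambda^5$), and set $v^\lambda := u^\lambda/\lambda$, so that $\|v^\lambda\| = 1$ and $\cL v^\lambda \in \R d$. By the Sobolev embedding $\cH \hookrightarrow L^4(\Omega,\R^n)$ (valid since $N \le 3$), one has $|\phi(u^\lambda)| \le C\|u^\lambda\|^4 = C\lambda^4$, so rewriting $E(u^\lambda) \le \lambda^5$ as
$$
\tfrac{1}{2}\int_\Omega \bigl(|\nabla u^\lambda|^2 + |(u^\lambda)^-|^2\bigr)\,dx \;\le\; \phi(u^\lambda) + \lambda^5 \;\le\; C\lambda^4 + \lambda^5
$$
and dividing by $\lambda^2$ yields $\int_\Omega(|\nabla v^\lambda|^2 + |(v^\lambda)^-|^2)\,dx \to 0$ as $\lambda \to 0^+$.

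Picking a sequence $\lambda_k \to 0^+$, I pass to a subsequence along which $v^{\lambda_k} \weakto v$ in $\cH$ and $v^{\lambda_k} \to v$ strongly in $L^p(\Omega,\R^n)$ for every $p < 2^*$ (compact embedding). Weak lower semicontinuity of $\int|\nabla\cdot|^2$ forces $v$ to be a constant vector; strong $L^2$-convergence of $(v^{\lambda_k})^-$ to $v^-$ together with $\int|(v^{\lambda_k})^-|^2 \to 0$ forces $v \ge 0$; continuity of $\cL$ and the constancy of $v$ then show $v \in \R d$, so $v = t d$ for some $t \ge 0$. The identity
$$
1 = \|v^{\lambda_k}\|^2 = \int_\Omega |\nabla v^{\lambda_k}|^2\,dx + \int_\Omega |v^{\lambda_k}|^2\,dx \;\longrightarrow\; 0 + |\Omega|\, t^2\, |d|^2
$$
pins down $|\Omega| t^2 |d|^2 = 1$; in particular $t > 0$.

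Since $v = t d$ is strictly positive componentwise and $v^{\lambda_k} \to v$ in $L^4$, I obtain
$$
\frac{4\phi(u^{\lambda_k})}{\lambda_k^4} \;=\; \sum_{i,j=1}^n \beta_{ij} \int_\Omega \bigl((v^{\lambda_k}_i)^+\bigr)^2 \bigl((v^{\lambda_k}_j)^+\bigr)^2 \,dx \;\longrightarrow\; |\Omega|\, t^4\, b(d^2) \;<\; 0,
$$
where the strict negativity is precisely the sign condition $b(d^2) < 0$ secured just before the statement of the lemma (i.e., the failure of strict copositivity of $B$). On the other hand, the rearranged inequality above gives $\phi(u^{\lambda_k}) \ge -\lambda_k^5$, so $\phi(u^{\lambda_k})/\lambda_k^4 \ge -\lambda_k \to 0$. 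These two asymptotic statements are incompatible, providing the desired contradiction.

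The main subtlety is ruling out degeneracy of the weak limit $v$: if $t = 0$, the rescaling argument collapses because the quartic term would carry no useful sign information. This is overcome by observing that $\int|\nabla v^{\lambda_k}|^2 \to 0$ upgrades the weak convergence toward a constant limit into strong convergence of the $L^2$-masses, so that the normalization $\|v^{\lambda_k}\| = 1$ is inherited entirely by the constant component of $v$, forcing $t > 0$.
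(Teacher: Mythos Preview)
Your proof is correct and rests on the same core idea as the paper's: a normalized sequence in $M_\lambda$ with vanishing gradient and negative-part terms must converge (strongly, thanks to compact embeddings and the norm constraint) to a constant positive multiple of $d$, whence $\phi$ is strictly negative in the limit, contradicting the assumed smallness of $E$. The paper organizes this slightly differently---it first isolates a quantitative dichotomy $\max\{-\phi(u),(\int|\nabla u|^2+|u^-|^2)^2\}\ge\kappa_1$ on $M_1$, scales it to $M_\lambda$, and then exhibits an explicit $\lambda$ via the competing bounds $\phi\le\kappa_2\lambda^4$ and $\text{grad}\ge\sqrt{\kappa_1}\lambda^2$---whereas you fold everything into a single contradiction by sending $\lambda\to0^+$; but the compactness step and the use of $b(d^2)<0$ are identical.
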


\begin{proof}
We first show that there exists $\kappa_1>0$ such that
\begin{equation}
  \label{eq:estimate-linking}
\max \Bigl\{-\phi(u),\Bigl(\int_\Omega \bigl(|\nabla
u|^2+|u^-|^2\bigr)\:dx\Bigr)^2 \Bigr\} \ge \kappa_1 \qquad \text{for all $u \in
  M_1$}.
\end{equation}
Indeed, suppose by contradiction that there exists a sequence $(u_k)_k
\subset M_1$ such that
$$
\liminf_{k \to \infty}\phi(u_k) \ge 0 \qquad \text{and}\qquad \lim_{k
  \to \infty} \int_\Omega \bigl(|\nabla
u_k|^2+|u_k^-|^2\bigr)\:dx = 0.
$$
Since $\|u_k\|= 1$ for all $k$, we may pass to a
subsequence such that $u_k \to u$ in $\cH$ as
$k \to \infty$, where $u \not=0$ is a constant vector with $u_i \ge 0$ for
$i=1,\dots,n$. By continuity of the map $\cL$ and the functional
$\phi$, we find that
\begin{equation}
  \label{eq:contradiction}
\cL u \in \R d \qquad \text{and} \qquad \phi(u) \ge 0.
\end{equation}
Since $u$ is a constant vector, we conclude that $u \equiv \lambda d$ for some $\lambda > 0$. By the
choice of $d$ we deduce that $\phi(u)<0$, contrary to \eqref{eq:contradiction}.
 Thus we have proved
\eqref{eq:estimate-linking}.
By homogeneity, we deduce that, for every $\lambda>0$,
\begin{equation*}
\max\{-\phi(u),\Bigl(\int_\Omega |\nabla u|^2 +|u^-|^2\bigr) \:dx\Bigr)^2\} \ge \kappa_1
\lambda^4 \qquad \text{for all $u \in M_\lambda$.}
\end{equation*}
On the other hand, it is also clear that there exists $\kappa_2>0$
independent of $\lambda>0$ such that
\begin{equation*}
\phi(u) \le \kappa_2 \|u\|^4 = \kappa_2
\lambda^4 \qquad \text{for all $u \in M_\lambda$.}
\end{equation*}
We now claim that \eqref{eq:estimate-linking-final} holds for $\lambda=
\frac{\sqrt[4]{\kappa_1}}{2\sqrt{\kappa_2}}$. Indeed, let $u \in M_\lambda$.
Then
$$
-\phi(u) \ge \kappa_1
\lambda^4 \qquad \text{or}\qquad \int_\Omega \bigl(|\nabla u|^2 +|u^-|^2\bigr) \:dx \ge \sqrt{\kappa_1}
\lambda^2.
$$
In the first case we have
$$
E(u)=\int_\Omega \bigl(|\nabla u|^2 +|u^-|^2\bigr)\:dx -
\phi(u) \geq \kappa_1 \lambda^4>0,
$$
whereas in the second case
$$
E(u)\geq \frac{\sqrt{\kappa_1}\lambda^2}{2}-\kappa_2 \lambda^4 =  \lambda^2 (\frac{\sqrt{\kappa_1}}{2}  -\kappa_2 \lambda^2)=\frac{\sqrt{\kappa_1}}{4}\lambda^2 >0.
$$
We thus have established \eqref{eq:estimate-linking-final}.
\end{proof}

From now on we fix $\lambda>0$ such that (\ref{eq:estimate-linking-final}) holds, and we fix functions $\phi_i \in C^2(\overline{\Omega})$, $i=1,\dots,n$ such
that $0\leq \phi_i \le 1,\: \phi_i \not \equiv 0$ for
  $i=1,\dots,n$ and $\phi_i\, \phi_j \equiv 0$ for $i \not=j.$ We also put
$$
\kappa:= \min_{1 \le i \le n} \int_\Omega
\phi_i^2(x)\:dx>0.
$$
We define the map
\begin{equation}
\label{eq:crucial-definition}
h: C^n_+ \to \cH, \qquad c \mapsto h_c = (c_1 \phi_1,\dots, c_n \phi_n).
\end{equation}
Then we have
$$
\sum_{i,j=1}^n \int_\Omega \beta_{ij}(h_c^i)^2(h_c^j)^2\:dx= \sum_{i=1}^n \beta_{ii}\,c_i^4 \int_\Omega \phi_i^4(x)\:dx>0 \quad \text{ for every }c\in C^n_+\setminus \{0\},
$$
which by homogeneity implies that there exists $\kappa_3>0$ such that
$$
\sum_{i,j=1}^n  \int_{\Omega} \beta_{ij} (h_c^i)^2 (h_c^j)^2\:dx >
\kappa_3  |c|^4 \qquad \text{for every
  $c \in C^n_+\setminus \{0\}$.}
$$
As a consequence, there exists $R_1>0$ such that
\begin{equation}
  \label{eq:11}
E(h_c)\leq \frac{|c|^2}{2} \Bigl(\max_{1 \le i \le n} \int_\Omega |\nabla \phi_i|^2\Bigr) - \frac{\kappa_3}{4}|c|^4 \leq 0 \qquad \text{for $c \in C^n_+$ with $|c| \ge R_1$.}
\end{equation}
Next we consider the homotopy
$$
C^n_+ \times [0,1] \to \cH,\qquad
(c,t) \mapsto h_{c,t} \quad \text{with $h_{c,t}(x)=
(1-t)c + t h_c(x)$.}
$$
We note that $h_{c,t}(C^n_+) \subset \partial C^n_+$ for $c\in \partial C^n_+,\: 0 \le t \le 1$, and therefore \eqref{eq:boundary-positive} implies that
$$
\sum_{i,j=1}^n \int_\Omega \beta_{ij}(h_{c,t}^i)^2(h_{c,t}^j)^2\:dx>0
\qquad \text{ for every }c\in \partial C^n_+\setminus \{0\}, \ t\in[0,1].
$$
By reasoning exactly as in (\ref{eq:11}) we deduce the existence of $R_2>0$ such that
\begin{equation}
  \label{eq:12}
E(h_{c,t})\leq 0 \qquad \text{ for every $c\in \partial C^n_+ \setminus \{0\}$ such that $|c|\geq R_2$},\ t\in [0,1].
\end{equation}
Moreover, recalling that $0\leq \phi_i\leq 1$, we see that
$$
\|h_{c,t}\|^2 \geq \sum_{i=1}^n c_i^2 \int_\Omega
\bigl[(1-t) + t \phi_i(x)\bigr]^2\,dx
\ge \sum_{i=1}^n c_i^2 \int_\Omega  \phi_i^2(x)\,dx
\ge \kappa |c|^2
$$
for $0 \le t \le 1$, $c \in C^n_+$ and therefore
\begin{equation}
  \label{eq:7}
\|h_{c,t}\| > \lambda \qquad \text{for $0 \le t \le 1$ and $c \in C^n_+$
  with $|c| \ge R_3:=\frac{\lambda+1}{\sqrt{\kappa}}$.}
\end{equation}
Finally we take $D:=B_{R+1}(0) \cap C^n_+$ with
$R:=\max\{R_1,R_2,R_3\}$. We define the continuous function
$$
\Theta:D\to \cH,\qquad \left \{
  \begin{aligned}
&\Theta(c)=c &&\qquad \text{if $c\in B_R(0)\cap C^n_+$},\\
&\Theta(c)= h_{c,|c|-R} &&\qquad \text{if $c \in \bigl(B_{R+1}(0)\setminus B_R(0)\bigr)\cap
C^n_+$},\\
&\Theta(c)=h_c &&\qquad \text{if
$c\in C^n_+\setminus B_{R+1}(0)$,}
  \end{aligned}
\right.
$$
By combining \eqref{eq:boundary-positive} with \eqref{eq:11} and \eqref{eq:12} we see that
\begin{equation}\label{eq:Theta-leq-0-ontheboundary}
E(\Theta (c))\leq 0 \qquad \text{ for every } c\in \partial D.
\end{equation}
We are now in a position to define a minimax value for $E$.

\begin{proposition}
\label{sec:proof-exist-result-2}
Let
$$
\cT:=\{ \gamma:D\to \cH :\ \text{$\gamma$ continuous,
  $\gamma|_{\partial D}=\Theta$}\}
\qquad \text{and}\qquad
\sigma: =\inf_{\gamma\in \cT} \sup_{c \in D}E(\gamma(c)).
$$
Then $\sigma \geq \sigma_\lambda>0$, and $\sigma$ is a
critical
value of $E$.
\end{proposition}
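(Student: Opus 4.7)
The plan is to establish two claims separately: (i) the lower bound $\sigma \ge \sigma_\lambda$, obtained via a topological linking argument showing that $\gamma(D)\cap M_\lambda \ne \emptyset$ for every $\gamma \in \cT$; and (ii) that $\sigma$ is then attained as a critical value, via the standard deformation lemma combined with the Palais--Smale property from Lemma~\ref{sec:proof-exist-result}(ii). Granted (i), one has $\sup_{c\in D}E(\gamma(c)) \ge E(\gamma(c_\gamma)) \ge \sigma_\lambda$ for any $c_\gamma\in \gamma^{-1}(M_\lambda)$, whence $\sigma \ge \sigma_\lambda > 0$.

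For (i), I would introduce the continuous map
$$
\Phi : \cH \to \R^n,\qquad \Phi(u) := \bigl(P\cL u,\; \|u\|-\lambda\bigr),
$$
where $P:\R^n\to d^\perp$ denotes orthogonal projection onto the hyperplane orthogonal to $d$. Then $\Phi^{-1}(0) = M_\lambda$ by construction, so producing a point of $\gamma(D)\cap M_\lambda$ reduces to finding a zero of $\Phi\circ\gamma:D\to\R^n$. Since $\gamma|_{\partial D} = \Theta|_{\partial D}$, the affine homotopy $(1-s)\Theta + s\gamma$ coincides with $\Theta$ on $\partial D$ for every $s\in[0,1]$, so by homotopy invariance of the Brouwer degree $\deg(\Phi\circ\gamma,D,0) = \deg(\Phi\circ\Theta,D,0)$ as soon as both are well-defined. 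Hence the task is reduced to showing that $\Phi\circ\Theta$ does not vanish on $\partial D$ and that its degree is nonzero.

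The main obstacle is this degree computation. I would split $\partial D$ into the outer piece $\{|c|=R+1\}\cap C^n_+$ and the flat faces $\partial C^n_+ \cap \overline{B_{R+1}(0)}$. On the outer piece $\Theta(c)=h_c$, and \eqref{eq:7} yields $\|\Theta(c)\|>\lambda$, so the last component of $\Phi\circ\Theta$ is positive. On a face, some coordinate $c_k$ vanishes; by the explicit definitions of the constant map and of $h_{c,t}$, the $k$-th component of $\Theta(c)$ vanishes identically on $\Omega$, so $(\cL\Theta(c))_k=0$; since $d_k>0$, the relation $\cL\Theta(c)\in\R d$ would force $\cL\Theta(c)=0$, and the nonnegativity of $\Theta(c)$ would then force $\Theta(c)\equiv 0$, contradicting $\|\Theta(c)\|=\lambda$. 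For the degree, \eqref{eq:7} also excludes zeros in the annulus $\{R\le |c|\le R+1\}\cap C^n_+$. In the inner ball, $\Theta(c)=c$ as a constant $\cH$-valued function, so $\Phi\circ\Theta(c)=(|\Omega|\,Pc,\;\sqrt{|\Omega|}\,|c|-\lambda)$; the estimate $\kappa\le|\Omega|$ (valid because $0\le\phi_i\le 1$) together with $R\ge R_3 = (\lambda+1)/\sqrt{\kappa}$ gives $\lambda/\sqrt{|\Omega|}<R$, so
$$
c^* := \frac{\lambda}{\sqrt{|\Omega|}\,|d|}\,d \in B_R(0)\cap\innt(C^n_+)
$$
is the unique zero of $\Phi\circ\Theta$ in $D$, and its Jacobian is an orthogonal matrix scaled by positive constants. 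Therefore $\deg(\Phi\circ\Theta,D,0)=\pm 1\ne 0$, completing (i).

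For (ii), assuming by contradiction that $\sigma$ is a regular value, the standard deformation lemma (applicable by the Palais--Smale condition of Lemma~\ref{sec:proof-exist-result}(ii)) provides, for $\bar\varepsilon := \sigma_\lambda/2$, some $\varepsilon\in(0,\bar\varepsilon)$ and a continuous $\eta:\cH\to\cH$ with $\eta=\id$ on $\{u:|E(u)-\sigma|\ge \bar\varepsilon\}$ and $\eta(\{E\le\sigma+\varepsilon\})\subset\{E\le\sigma-\varepsilon\}$. Choosing $\gamma\in\cT$ with $\sup_D E(\gamma)\le \sigma+\varepsilon$, the map $\eta\circ\gamma$ still agrees with $\Theta$ on $\partial D$ because \eqref{eq:Theta-leq-0-ontheboundary} gives $E(\Theta(c))\le 0 < \sigma-\bar\varepsilon$ there, so those points are fixed by $\eta$. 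Hence $\eta\circ\gamma\in\cT$, yet $\sup_D E(\eta\circ\gamma)\le\sigma-\varepsilon<\sigma$, contradicting the definition of $\sigma$.
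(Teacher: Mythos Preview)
Your proof is correct and follows essentially the same strategy as the paper: a Brouwer degree argument on a finite-dimensional reduction $\cH\to\R^n$ to establish the linking $\gamma(D)\cap M_\lambda\neq\emptyset$, followed by the standard deformation lemma using Palais--Smale and \eqref{eq:Theta-leq-0-ontheboundary}. The paper encodes $M_\lambda$ as the preimage of $\lambda^2 d$ under $\cK(u)=\|u\|^2 d + P(\cL u)$, while you use $\Phi(u)=(P\cL u,\|u\|-\lambda)$ with target $0$; these are equivalent parametrizations, and your boundary analysis and identification of the unique zero $c^*$ match the paper's. (One minor inaccuracy: the Jacobian of $\Phi\circ\Theta$ at $c^*$ is not literally an orthogonal matrix times a scalar---the $d^\perp$ block scales by $|\Omega|$ while the radial part scales by $\sqrt{|\Omega|}$---but it is invertible, which is all you need for $\deg=\pm1$.)
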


\begin{proof}
We first show that
\begin{equation}
\label{eq:c>c-lambda_essential_nonempty_intersection}
\gamma(D)\cap M_\lambda\neq \emptyset \qquad \text{ for any $\gamma\in \cT$},
\end{equation}
then Lemma \ref{sec:proof-exist-result-1} immediately yields $\sigma \geq \sigma_\lambda>0$.
To prove the intersection property
(\ref{eq:c>c-lambda_essential_nonempty_intersection}), we will use
classical degree theory (see e.g.
\cite[Appendix D]{willem}). For this we define
$$
\cK: \cH \to \R^n,\qquad \cK u = \|u\|^2 d + P (\cL u)
$$
where $\cL$ is defined in (\ref{eq:10}) and $P: \R^n \to \R^n$ is the
orthogonal projection onto $d^\perp:= \{c \in \R^n \::\: c \cdot e = 0\}$.
We observe that \eqref{eq:c>c-lambda_essential_nonempty_intersection}
holds if and only if
\begin{equation}
  \label{eq:6}
\lambda^2 d \:\in\: [\cK \circ \gamma](D) \qquad \text{ for any $\gamma\in \cT$.}
\end{equation}
We first consider $\gamma= \Theta$. As a consequence of (\ref{eq:7})
and the definition of $\Theta$, we have for $c \in
\overline D$
$$
\cK(\Theta(c))= \lambda^2 d \qquad \text{if and only if}\qquad c= \mu d
\; \text{with $\mu=\frac{\lambda}{\sqrt{|\Omega|}|d|}$}
$$
Moreover $\mu d\in B_R(0)\cap C^n_+$ and hence, for $c$ in a neighborhood of $\mu d$ in $\R^n$ we have
$\cK(\Theta(c))= |\Omega|\Bigl( |c|^2 d +P c\Bigr)$, so that the derivative of $\cK \circ \Theta$ at $\mu d$ is given by
$$
[\cK \circ \Theta]'(\mu d)e =  |\Omega|\Bigl(2\mu [d \cdot e] d + P e\Bigr) \qquad
\text{for $e \in \R^n$.}
$$
If we choose a basis of $\R^n$ of the type $\{d,e_1,\ldots,e_{n-1}\}$, where $\{e_1,\ldots,e_{n-1}\}$ is a basis of the subspace $d^\perp$, then the matrix of the linear map $[\cK \circ \Theta]'(\mu d)$ in such basis is given by ${\rm diag}(2\mu |d|^2 |\Omega|, |\Omega|,\ldots,|\Omega|)$. Hence the Jacobian determinant of $\cK \circ \Theta$ at $\mu d$ is
$2 |\Omega|^n \mu |d|^2 >0$
and therefore $\deg (\cK \circ \Theta,D,\lambda d)=1.$
Consequently, we also have $\deg (\cK \circ \gamma,D,\lambda d)=1$ for
every $\gamma \in \cT$ by standard properties of the degree, since $\gamma \equiv
\Theta$ on
$\partial D$. Hence (\ref{eq:6}) and therefore
(\ref{eq:c>c-lambda_essential_nonempty_intersection}) holds.\\
We still need to prove that $\sigma$ is a critical value of $E$. We argue
by contradiction and assume that this is not the case. Then, since
$E$ satisfies the Palais-Smale condition, there exists $\eps \in
(0,\frac{\sigma}{2})$ such that
$$
\|\nabla E(u)\| \ge \eps \qquad \text{for all $u \in
  \cH$ with $\sigma-2 \eps \le E(u) \le \sigma+2 \eps.$}
$$
Now the quantitative
deformation lemma (see \cite[Lemma 2.3]{willem}) yields a continuous map $\eta: \cH \to \cH$ such that
$$
E(\eta(u))\leq \sigma-\eps \qquad \text{ whenever }E(u)\leq
\sigma+\eps
$$
and
$$\eta(u)=u \qquad \text{ whenever } E(u) \le \sigma- 2\eps
$$
Now let $\gamma\in \cT$ with $\sup \limits_{c \in D}E(\gamma(c)) \leq
\sigma+\eps$. Since $E \circ \gamma = E \circ \Theta \leq 0$ on
$\partial D$, we infer that $\eta \circ \gamma=\gamma= \Theta$ on $\partial D$
and therefore $\eta \circ \gamma \in \cT$. This yields
$$
c\leq \sup_{\eta\circ \gamma(D)}E \leq \sigma-\eps,
$$
a contradiction. We conclude that $\sigma$ is a critical value of $E$,
as claimed.
\end{proof}

\begin{altproof}{Theorem~\ref{sec:exist-nonex-neumann} (completed)}
By Proposition \ref{sec:proof-exist-result-2}, there exists a nontrivial critical
point of $E$, which by Lemma \ref{sec:proof-exist-result}-(i) is a solution of (\ref{eq:neumann-problem}).
\end{altproof}

\section{Nonexistence results and matrix conditions}
\label{sec:matr-cond-nonex}
In this Section we will give the proof of Theorem \ref{planar-case}
and Propositions \ref{scc-implies-sc} and
\ref{sec:introduction--1}.
We start with the

\begin{altproof}{Theorem \ref{planar-case}}
Suppose by contradiction that \eqref{eq:special-case} admits a nontrivial solution. Without loss of generality, we may assume that
$u_i>0$ in $\R^N$ for $i=1,\dots,n$. For $R>0$, consider the function
$$
f_R:\R\to \R,\qquad \left\{
\begin{aligned}
f_R(r)&=1 &&\qquad \text{if $r\leq R$,}\\
f_R(r)&=\frac{\log(r/R^2)}{\log(1/R)}&&\qquad \text{if $R\leq r\leq
  R^2$,}\\
f_R(r)&=0 &&\qquad \text{if $r\geq R^2$.}
\end{aligned}
\right.
$$
For $N=1,2$, if we take the radial function $\phi_R \in H^1(\R^N),\: \phi_R(x)=f_R(|x|)$,
we then have
$$
\int_{\R^N}|\nabla \phi_R|^2\:dx\to 0 \qquad  \text{as $R \to \infty$.}
$$
In fact, for $N=1$,
$$
\int_{\R}|\nabla \phi_R|^2\:dx=  \frac{2}{\log^2R}\int_R^{R^2}
\frac{1}{r^2} \, dr = 2 \left(\frac{1}{R}-\frac{1}{R^2}\right) \frac{1}{\log^2 R} \to 0 \qquad \text{as $R \to \infty$,}
$$
whereas for $N=2$,
$$
\int_{\R^2}|\nabla \phi_R|^2\:dx=  \frac{2\pi}{\log^2R}\int_R^{R^2}
\frac{1}{r} \, dr = \frac{2\pi}{\log R} \to 0 \qquad \text{as $R \to \infty$.}
$$
Now, multiplying (\ref{eq:special-case}) with
$\frac{\phi_R^2}{u_i}$ and integrating by parts, we get
\begin{align*}
\sum_{j=1}^n\beta_{ij}\int_{\R^N} u_j^2 \phi_R^2\:dx= \int_{\R^N} \frac{-\Delta u_i}{u_i}\phi_R^2\:dx=
\int_{\R^N} \nabla u_i \cdot \left(\frac{2 \phi_R \nabla \phi_R}{u_i}- \phi_R^2 \frac{\nabla u_i}{u_i^2}
\right)\:dx\\
=- \int_{\R^N} \left|\frac{\phi_R}{u_i} \nabla u_i-\nabla \phi_R \right|^2\: dx +  \int_{\R^N} |\nabla \phi_R|^2\:dx
\le \int_{\R^N} |\nabla \phi_R|^2\:dx={\rm o}(1)
\end{align*}
as $R \to \infty$. Next we let $c_j(R):= \int_{\R^N} u_j^2 \phi_R^2\:dx$ for $j=1,\dots,n$. By multiplying the above inequality with $c_i(R)$ and
summing over $i$, we obtain from the strict copositivity of the matrix $B$
$$
0 \le {\rm const} \sum_{i=1}^n c_i^2(R) \le \sum_{i,j=1}^n \beta_{ij}c_i(R) c_j(R) \le {\rm o}(1)  \sum_{i=1}^n c_i(R) \qquad \text{as $R \to \infty$},
$$
Thus $\int_{B_R(0)} u_i^2 \leq {\rm o}(1)\to 0$ as $R\to +\infty$ and hence $u_i\equiv 0$ for every $i$, contrary to what we have assumed.
\end{altproof}

\begin{altproof}{Proposition \ref{scc-implies-sc}}
First we show i), so we assume that $B \in S(n)$ is strictly cubically
copositive. Hence there exists $\mu \in C^n_+$ such that $\sum
\limits_{i,j=1}^n \beta_{ij}c_i^2c_j  \mu_j> 0$ for every $c\in C^n_+ \setminus \{0\}$.
To show strict copositivity of $B$, we need to prove that $b(c)>0$
for  $c\in C^n_+ \setminus \{0\}$, where $b: \R^n \to \R$ denotes the quadratic form associated with
$B$ (see (\ref{eq:9})) or, equivalently, that $\tilde b(c)=b(c^2)=b(c_1^2,\ldots, c_n^2)>0$ for $c\in C^n_+\setminus \{0\}$. For a nonempty subset $\cN \subset
\{1,\dots,n\}$, we put
$$
C_\cN:= \{c \in C^n_+\setminus \{0\}\::\: c_i=0\: \text{ for $i \not \in
  \cN$}\}.
$$
Arguing by induction on $|\cN|$, we prove that, for every $\cN \subset \{1,\dots,n\}$,
\begin{equation}
  \label{eq:induction-cubically-copositivity}
\tilde b(c)>0 \qquad \text{for all $c \in C_\cN$.}
\end{equation}
If $|\cN|=1$, then $\cN=\{i\}$ for some $i=1,\dots,n$, and choosing  $c=e_i$ in
Definition~\ref{cubic-positivity} immediately gives $\beta_{ii}>0$ and
therefore \eqref{eq:induction-cubically-copositivity}.\\
Next we fix $l \in \{2,\dots,k\}$, and we suppose that
\eqref{eq:induction-cubically-copositivity} holds for all $\cN$ with
$|\cN|\le l-1$. For each ${\cN_*}$ with $|{\cN_*}|= l$, we
consider
$\tilde \mu = (\tilde \mu_1,\dots,\tilde \mu_n) \in C^n_+$,
where $\tilde \mu_i=\mu_i$ if $i \in \cN_*$ and $\tilde \mu_i=0$ if $i \notin \cN_*$.
We note that for every $c \in C_{{\cN_*}}$ we have
$$
\partial_{\tilde \mu} \tilde b(c)= \nabla \left(\sum \limits_{i,j =1}^n
\beta_{ij}c_i^2c_j^2\right)\cdot \tilde \mu =
4 \sum \limits_{i,j \in \cN_*} \beta_{ij}c_j^2c_i  \tilde \mu_i= 4 \sum \limits_{i,j=1}^n \beta_{ij}c_i^2c_j  \mu_j> 0
$$
As a consequence, by integrating the previous expression we deduce that
$\tilde b(c)>0$ for all $c$ that can be written as $c= \hat c + t \tilde \mu$
with $\hat c= 0$ or $\hat c \in C_\cN$ for some $\cN$ with $|\cN|\le
l-1$ and $t>0$. Since every $c \in C_{{\cN_*}}$ can be written in
this way, we conclude that \eqref{eq:induction-cubically-copositivity}
holds for every element of $C_{{\cN_*}}$.\\
Next we prove ii), arguing somewhat more directly than in \cite[Theorem 2.1]{dancer.wei.weth:10}. Let$$
B= \left (
  \begin{array}{cc}
  \beta_{11}& \beta_{12}\\
  \beta_{12}& \beta_{22}
  \end{array}
\right)
\quad \in \quad S(2)
$$
be strictly copositive, so that $\beta_{11},
\beta_{22}>0$ and $\beta_{12}> -\sqrt{\beta_{11} \beta_{22}}$ by (\ref{eq:strictly copositive in dim n=2}). To show the
strict cubic copositivity of $B$, we consider the vector
$\mu:=(\frac{1}{\sqrt[4]{\beta_{11}}},\frac{1}{\sqrt[4]{\beta_{11}}})
        \in C^2_+$. Take an arbitrary $c=(c_1,c_2) \in C^2_+\setminus \{(0,0)\}$.
        If either $c_1=0$ or $c_2=0$  then  $\sum_{i,j=1}^2 \beta_{ij} c_j^2 \mu_i c_i$ is either equal to $\beta_{22}c_2^3\mu_2>0$ or $\beta_{11}c_1^3\mu_1>0$ respectively. Suppose now that $c_1,c_2\neq 0$ and put $\tilde c_i=c_i \sqrt[4]{\beta_{ii}}$
          for $i=1,2$. Then
\begin{align*}
&\sum_{i,j=1}^2 \beta_{ij} c_j^2 \mu_i c_i=
\frac{1}{\sqrt[4]{\beta_{11}}}\ (\beta_{11} c_1^3+ \beta_{12}
    c_2^2 c_1) + \frac{1}{\sqrt[4]{\beta_{22}}} (\beta_{22} c_2^3+ \beta_{12}
    c_1^2 c_2)\\
&=\tilde c_1^3 +\tilde c_2^3 + \frac{\beta_{12}}{\sqrt{\beta_{11}\beta_{22}}} (\tilde c_2^2 \tilde c_1 + \tilde c_1^2 \tilde
  c_2)
>\tilde c_1^3 +\tilde c_2^3 -(\tilde c_2^2 \tilde c_1 + \tilde c_1^2 \tilde
  c_2)\geq 0,
\end{align*}
as required.
\end{altproof}

\begin{altproof}{Proposition~\ref{sec:introduction--1}}
Using the simple inequality $s^2 t +s t^2 \le s^3+t^3$ for $s,t
\ge 0$ and the fact that $B=(\beta_{ij})_{ij}$ is symmetric, we obtain
\begin{align*}
\sum_{i,j=1}^n \beta_{ij}c_j^2 c_i &\ge \sum_{i=1}^n \beta_{ii}c_i^3
+ \frac{1}{2}\sum_{{i,j=1}\atop{i\not= j}}^n \beta_{ij}^-(c_j^2
c_i+c_i^2 c_j) \ge \sum_{i=1}^n \beta_{ii}c_i^3
+ \frac{1}{2}\sum_{{i,j=1}\atop{i\not= j}}^n \beta_{ij}^-(c_i^3+c_j^3)\\
&= \sum_{i=1}^n \beta_{ii}c_i^3
+\sum_{{i,j=1}\atop{i\not= j}}^n \beta_{ij}^-c_i^3= \sum_{i=1}^n c_i^3 \Bigl(\beta_{ii}
+\sum_{{j=1}\atop{j\not=i}}^n \beta_{ij}^-\Bigr)\qquad \text{for $c \in C^n_{+}$.}
\end{align*}
Moreover, since
$$
\kappa_0:= \min_{i=1,\dots,n}\Bigl(\beta_{ii}+ \sum_{{j=1}\atop{j
    \not=i}}^n\beta_{ij}^-\Bigr) >0
$$
we infer that $\sum \limits_{i,j=1}^n \beta_{ij}c_j^2 c_i \ge \kappa_0 \sum
\limits_{i=1}^n
c_i^3 >0$ for every $c \in C^n_{+}\setminus\{0\}$.
\end{altproof}

\section{Results for systems with more general power-type nonlinearities}
\label{sec:generalization}

Some of the results that we have obtained for the cubic system \eqref{eq:special-case} can be extended to more general systems such as
\begin{equation}
  \label{eq:general-case}
-\Delta u_i = \sum_{j=1}^n \beta_{ij}u_j^\frac{p}{2} u_i^{\frac{p}{2}-1},\qquad
u_1,\dots,u_n \ge 0 \qquad \text{in $\R^\sN$,}
\end{equation}
where now the dimension $N$ is arbitrary and $2<p<2^*=2N/(N-2)$ if $N\geq 3$ and $2<p<\infty$ if $N \in
\{1,2\}$ {\color{red} (in fact, in this setting, the picture is less clear, and at the moment we need some further restriction on $p$, see below)}. In this section we state and prove such extensions. Observe that \eqref{eq:general-case} reduces to \eqref{eq:special-case} when $p=4$.

Most of the techniques used in the proofs will be simple adaptations of the ones used in the previous two sections. In such cases, we will only provide a sketch of the proof, stressing the major differences with respect to the cubic case.

Concerning the existence of nontrivial solutions of \eqref{eq:general-case}, we have the following.

\begin{theorem}
\label{sec:exist-nonex-rsn-1_withp}
Suppose that $2<p<2^*=2N/(N-2)$ if $N\geq 3$ and $2<p<\infty$ if $N \in
\{1,2\}$. Suppose furthermore that $\beta_{ii} \ge 0$ for $i=1,\dots,n$, and that
the matrix $B= (\beta_{ij})_{ij}\in S(n)$ is {\em not} strictly copositive.
Then \eqref{eq:general-case} admits a nontrivial solution.
\end{theorem}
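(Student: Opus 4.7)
The plan is to mimic the two-step strategy used to establish Theorem~\ref{sec:exist-nonex-rsn-1} in the cubic case. I would first prove the Neumann counterpart of Theorem~\ref{sec:exist-nonex-rsn-1_withp} on a bounded domain $\Omega \subset \R^\sN$ (a cube, say), and then extend the resulting solution to an entire solution of \eqref{eq:general-case} by iterated reflections across the faces of the cube. The reflection argument carries over unchanged: the vanishing of $\partial u_i/\partial \nu$ on $\partial \Omega$ together with standard elliptic regularity for the $(p-1)$-homogeneous nonlinearity (which is H\"older continuous since $p>2$) ensures that the reflected function is a classical periodic solution on all of $\R^\sN$.

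For the Neumann problem, the relevant $\cC^1$ functional on $\cH = H^1(\Omega,\R^n)$ becomes
\[
E(u) = \frac{1}{2}\int_\Omega \bigl(|\nabla u|^2 + |u^-|^2\bigr)\,dx - \frac{1}{p}\sum_{i,j=1}^n \int_\Omega \beta_{ij}(u_i^+)^{p/2}(u_j^+)^{p/2}\,dx,
\]
and the subcriticality assumption $p<2^*$ guarantees that the Sobolev embedding $\cH \hookrightarrow L^p(\Omega,\R^n)$ is compact, which in turn yields the Palais--Smale condition in exact analogy with Lemma~\ref{sec:proof-exist-result}. The preliminary reductions used in Section~\ref{sec:proof-exist-result-3} adapt seamlessly: if $Bc=0$ for some $c\in C^n_+\setminus\{0\}$, then the constant vector $u\equiv(c_1^{2/p},\dots,c_n^{2/p})$ is already a nontrivial solution of~\eqref{eq:general-case}; if $\beta_{ii}=0$ for some $i$, then any positive multiple of the coordinate vector $e_i$ is a constant solution; and the induction on $n$ that enforces \eqref{eq:boundary-positive} depends only on the structure of $B$, not on $p$.

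The minimax construction of Lemma~\ref{sec:proof-exist-result-1} and Proposition~\ref{sec:proof-exist-result-2} goes through by replacing the exponent $4$ with $p$ throughout. More precisely, because the nonlinear part $\phi$ is now $p$-homogeneous, the key dichotomy on $M_1$ becomes
\[
\max\Bigl\{-\phi(u),\Bigl(\int_\Omega(|\nabla u|^2+|u^-|^2)\,dx\Bigr)^{p/2}\Bigr\} \ge \kappa_1,
\]
which by homogeneity extends to $\ge \kappa_1\lambda^p$ on $M_\lambda$; combined with the Sobolev bound $\phi(u)\le \kappa_2\|u\|^p$, the dichotomy delivers $E(u)\ge \kappa_1\lambda^p$ in one case and $E(u)\ge \frac{1}{2}\kappa_1^{2/p}\lambda^2 - \kappa_2\lambda^p$ in the other, so choosing $\lambda>0$ small gives $\sigma_\lambda>0$ exactly as before. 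The test map $\Theta:D\to\cH$, the homotopy $h_{c,t}$, and the Brouwer degree computation yielding $\gamma(D)\cap M_\lambda\neq\emptyset$ depend only on the existence of a direction $d\in\innt(C^n_+)$ with $b(d^2)<0$ and on scaling, and thus persist verbatim, the upper barrier $E(h_c)\le 0$ for $|c|$ large following from $\beta_{ii}>0$ and $p>2$. The only real obstacle is bookkeeping: each homogeneity exponent must be adjusted coherently, but no genuinely new idea is required.
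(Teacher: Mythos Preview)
Your proposal is correct and mirrors the paper's own argument almost exactly: reduce to the Neumann problem, adapt the functional and the minimax machinery with the exponent $4$ replaced by $p$, and extend by reflection. The one bookkeeping slip is your choice of $d$: for general $p$ the contradiction in the analog of Lemma~\ref{sec:proof-exist-result-1} requires $\phi(\lambda d)<0$, i.e.\ $b(d^{p/2})<0$ rather than $b(d^2)<0$; the paper makes precisely this adjustment, and of course such a $d$ exists since one can take $d_i=c_i^{2/p}$ for any interior $c$ with $b(c)<0$.
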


As already discussed in the special case $p=4$, this result is an immediate consequence of the following

\begin{theorem}
\label{sec:exist-nonex-neumann_withp}
Suppose that $\Omega \subset \R^{\sN}$ is a bounded domain, $2<p<2^*$
if $N\geq 3$ and $2<p<\infty$ if $N\leq 2$. Suppose moreover that
the matrix $B= (\beta_{ij})_{ij} \in S(n)$ is {\em not} strictly
copositive but satisfies $\beta_{ii} \ge 0$ for $i=1,\dots,n$.
Then the Neumann problem
\begin{equation}
  \label{eq:neumann-problem_withp}
\left \{
  \begin{aligned}
  -& \Delta u_i = \sum_{j=1}^n \beta_{ij}u_j^\frac{p}{2} u_i^{\frac{p}{2}-1},\quad\;u_1,\dots,u_n \ge 0&&\qquad
\text{in $\Omega$},\\
&\frac{\partial u_1}{\partial \nu}=\frac{\partial u_2}{\partial \nu}=\dots=\frac{\partial u_n}{\partial \nu}=0 &&\qquad \text{on $\partial \Omega$},
  \end{aligned}
\right.
\end{equation}
admits a nontrivial solution.
\end{theorem}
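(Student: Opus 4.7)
The plan is to run exactly the variational scheme used for Theorem~\ref{sec:exist-nonex-neumann}, with the cubic potential $\frac{1}{4}\sum_{ij}\int \beta_{ij}(u_i^+)^2(u_j^+)^2$ replaced by
$$
\phi_p(u):=\frac{1}{p}\sum_{i,j=1}^n\int_\Omega \beta_{ij}(u_i^+)^{p/2}(u_j^+)^{p/2}\,dx,
$$
and with associated functional $E_p(u)=\frac{1}{2}\int_\Omega(|\nabla u|^2+|u^-|^2)\,dx-\phi_p(u)$. By symmetry of $B$, critical points satisfy $-\Delta u_i+u_i^-=\sum_j \beta_{ij}(u_j^+)^{p/2}(u_i^+)^{p/2-1}$, and testing with $u_i^-$ yields $u_i^-\equiv 0$, so they solve \eqref{eq:neumann-problem_withp}. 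The class $\cC^1$ property and the compactness of the nonlinear operator rely on the Sobolev embedding $\cH\hookrightarrow L^p(\Omega,\R^n)$, which is compact precisely under the assumption $p<2^*$ (respectively $p<\infty$ if $N\le 2$).

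I would carry out the same reductions: if $Bc=0$ for some $c\in C^n_+\setminus\{0\}$ then the constant vector with $i$-th entry $c_i^{2/p}$ is a nontrivial solution; otherwise assume $\beta_{ii}>0$ and, by induction on $n$, that $b>0$ on $\partial C^n_+\setminus\{0\}$. For the Palais--Smale condition, the key identity becomes
$$
pE_p(u_k)-E_p'(u_k)u_k=\Bigl(\tfrac{p}{2}-1\Bigr)\int_\Omega(|\nabla u_k|^2+|u_k^-|^2)\,dx,
$$
which is where $p>2$ is essential to bound the Dirichlet norm. If $\|u_k\|\to\infty$, the same rescaling $v_k=u_k/\|u_k\|$ gives $v_k\to v$ (a nonnegative nonzero constant vector); dividing the equation for $u_k$ by $\|u_k\|^{p-1}$ (note $p-1>1$ so the boundary and $\partial_i E_p(u_k)\phi$ terms vanish) yields $\sum_j\beta_{ij}v_j^{p/2}v_i^{p/2-1}=0$, hence $b(v^{p/2})=0$ with $v^{p/2}\in C^n_+\setminus\{0\}$, contradicting either \eqref{eq:without-loss-1} or the reduced boundary assumption.

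For the minimax geometry I would pick $e\in\mathrm{int}(C^n_+)$ with $b(e)<0$ (which exists because $B$ is not strictly copositive and $Bc\neq 0$ on $C^n_+\setminus\{0\}$), set $d_i=e_i^{2/p}$ so that $\phi_p(\lambda d)=\frac{|\Omega|}{p}\lambda^p b(d^{p/2})<0$, and keep the same sets $M_\lambda=\{u\in\cH:\|u\|=\lambda,\ \cL u\in\R d\}$. The analogue of Lemma~\ref{sec:proof-exist-result-1} is the alternative
$$
\max\Bigl\{-\phi_p(u),\Bigl(\int_\Omega(|\nabla u|^2+|u^-|^2)\,dx\Bigr)^{p/2}\Bigr\}\ge\kappa_1\|u\|^p,
$$
proved by the same contradiction-and-homogeneity argument; combined with $\phi_p(u)\le\kappa_2\|u\|^p$ this gives $\inf_{M_\lambda}E_p>0$ for small enough $\lambda$. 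The linking map $h_c=(c_1\phi_1,\dots,c_n\phi_n)$ with disjoint-supported $\phi_i$ satisfies $\phi_p(h_c)\ge\kappa_3|c|^p$, and because $p>2$ one still has $E_p(h_c)\le C|c|^2-\kappa_3|c|^p/p\le 0$ for $|c|$ large. The homotopy to constants inside $\partial C^n_+$ is controlled by the boundary positivity of $b$ exactly as before.

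Finally, the degree argument goes through with the very same $\cK u=\|u\|^2 d+P\cL u$: the equation $\cK(c)=\lambda^2 d$ on constants $c=\mu d$ has the same non-degenerate solution with positive Jacobian, and the minimax value $\sigma:=\inf_{\gamma\in\cT}\sup_{c\in D}E_p(\gamma(c))\ge\inf_{M_\lambda}E_p>0$ is a critical value by the standard deformation argument, yielding the desired nontrivial solution. The main obstacle is really the verification of Palais--Smale and of the growth estimates on $\phi_p$, where one must check that the single exponent $p$ plays simultaneously the three roles previously played by the numerical constants $2,3,4$ in the cubic case; the bracketed remark about ``a further restriction on $p$'' presumably refers to the restriction $p<2^*$ needed for compactness, together with $p>2$ needed for the superlinearity and Palais--Smale bound, but no other hypothesis on $p$ should be required.
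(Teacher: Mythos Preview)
Your proposal is correct and follows essentially the same approach as the paper: the paper's own proof is a brief sketch indicating exactly the adjustments you describe (replace $\phi$ by $\phi_p$, choose $d$ with $b(d^{p/2})<0$, prove the analogous alternative with exponent $p/2$, and observe that the linking and degree arguments go through unchanged). One small correction: the bracketed remark about ``a further restriction on $p$'' does not concern this existence theorem at all---it refers to the \emph{nonexistence} results later in Section~\ref{sec:generalization}, where the additional hypotheses $p\le\frac{2N-2}{N-2}$ (Proposition~\ref{(p-1)-copositive-nonex}) and $p\le 4$ (Theorem~\ref{planar-case_withp}) are indeed required.
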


We briefly outline the proof of Theorem
\ref{sec:exist-nonex-neumann_withp} and point out the adjustments
which have to be made. Exactly as in the proof of Theorem \ref{sec:exist-nonex-neumann}, one can suppose without loss of generality that
\begin{equation}
  \label{eq:without-loss-1_withp}
B c \not= 0\qquad \text{for every $c \in
C^n_+ \setminus \{0\}$.}
\end{equation}
(otherwise we have a constant solution of the type \mbox{$u \equiv (\sqrt[p]{c_1^2},\dots,\sqrt[p]{c_n^2})$}). Moreover, we may also assume that
$\beta_{ii}>0$ for $i=1,\dots,n$ and
\begin{equation}
  \label{eq:boundary-positive_withp}
\sum_{i,j=1}^n \beta_{ij} c_i c_j >0 \qquad \text{for all $c \in \partial C^n_+  \setminus \{0\}$}.
\end{equation}
Now, we consider the functional
$$
E_p: \cH \to \R, \qquad E(u)= \frac{1}{2}\int_\Omega \bigl(|\nabla
u|^2+|u^-|^2
\bigr)\:dx -\phi_p(u),
$$
where
$$
\phi_p(u):=\frac{1}{p} \sum_{i,j=1}^n \int_\Omega \beta_{ij}
(u_i^+)^\frac{p}{2} (u_j^+)^\frac{p}{2}  \:dx \qquad \text{for
  $u \in \cH$}.
$$
Again we have that $E_p$ satisfies the Palais-Smale condition and that critical points of $E_p$
are nonnegative solutions of
\eqref{eq:neumann-problem_withp}. Choosing now $d \in \innt(C^n_+)$
such that
$b(d^\frac{p}{2})=b(d_1^\frac{p}{2},\ldots,d_n^\frac{p}{2})<0$ and defining
$$
M_\lambda := \{u \in \cH\::\: \|u\|=\lambda,\: \cL u \in \R d\},
$$
we can prove that
\begin{equation*}
\max\{-\phi(u),\Bigl(\int_\Omega |\nabla u|^2 +|u^-|^2\bigr) \:dx\Bigr)^\frac{p}{2}\} \ge \kappa_1
\lambda^p \qquad \text{ and } \qquad \phi(u) \le \kappa_2 \|u\|^p = \kappa_2\lambda^p
\end{equation*}
for all $u \in M_\lambda$ with constants $\kappa_1,\kappa_2>0$. From
this we then deduce that
$$
\sigma_\lambda:=\inf_{M_\lambda} E_p>0  \qquad \text{ for } \lambda=\frac{
\kappa_1^\frac{2}{p(p-2)}}{(4\kappa_2)^\frac{1}{p-2}}.
$$
The rest of the proof, namely the minimax principle relying on the
construction of the set $D \subset
C^n_+$ and the map $\Theta:D\to \cH$, can be carried out exactly as in
the special case $p=4$, see Section~\ref{sec:proof-exist-result-3}.

Next, we turn to the nonexistence results. Having once more the
results of Gidas \cite{gidas:80} in mind, we start by generalizing the
notion of strict cubic copositivity. Therefore in the following we will call a matrix $B \in S(n)$ {\em strictly ($p-1$)--copositive} if there exists
$\mu \in C^n_+$ such that
\begin{equation}
  \label{eq:2_withp}
\sum \limits_{i,j=1}^n \beta_{ij}c_j^\frac{p}{2}c_i^{\frac{p}{2}-1} \mu_i> 0 \qquad \text{for all
$c \in C^n_+ \setminus \{0\}$}.
\end{equation}

This notion gives rise to the following nonexistence result for (\ref{eq:neumann-problem_withp}).
\begin{proposition}
\label{(p-1)-copositive-nonex}
Suppose that $2<p\leq \frac{2N-2}{N-2}$ if $N\geq 3$ and $2<p<\infty$ if $N\leq2$. If $B \in S(n)$ is strictly $(p-1)$--copositive,  then
\eqref{eq:general-case} does not admit a nontrivial solution.
\end{proposition}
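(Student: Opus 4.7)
The strategy is to mimic the proof of Proposition~\ref{cubically-copositive-nonex} line by line, using that strict $(p-1)$--copositivity combined with homogeneity produces an inequality of the same form as \eqref{eq:8}, with exponent $p-1$ in place of $3$. Precisely, taking $\mu \in C^n_+$ as in \eqref{eq:2_withp}, one observes that applying \eqref{eq:2_withp} to the coordinate vectors forces all $\mu_i>0$, and that the left-hand side of \eqref{eq:2_withp} is homogeneous of degree $p-1$ in $c$. Since the continuous positive function $c \mapsto \sum_{i,j}\beta_{ij} c_j^{p/2}c_i^{p/2-1}\mu_i$ attains a positive minimum on the compact set $\{c \in C^n_+ : \sum_i \mu_i c_i = 1\}$, we obtain a constant $\kappa=\kappa(B,\mu)>0$ such that
\begin{equation*}
\sum_{i,j=1}^n \beta_{ij}c_j^{p/2} c_i^{p/2-1}\mu_i \ \ge\ \kappa \Bigl(\sum_{i=1}^n \mu_i c_i\Bigr)^{p-1} \qquad \text{for every $c \in C^n_+$.}
\end{equation*}

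Next I would argue by contradiction: suppose that $u=(u_1,\dots,u_n)$ is a nontrivial solution of \eqref{eq:general-case}, so that $u_i>0$ on $\R^\sN$ by the maximum principle. The idea is to form a suitable positive linear combination and verify that it satisfies a scalar differential inequality of power type. Setting $v:= C \sum_{i=1}^n \mu_i u_i$ with the normalizing constant $C:=\kappa^{1/(p-2)}$, a direct computation using the equation and the pointwise inequality above yields
\begin{equation*}
-\Delta v \ =\ C \sum_{i,j=1}^n \beta_{ij} \mu_i u_j^{p/2} u_i^{p/2-1} \ \ge\ C \kappa \Bigl(\sum_{i=1}^n \mu_i u_i\Bigr)^{p-1} \ =\ C \kappa\, (v/C)^{p-1} \ =\ v^{p-1} \qquad \text{in $\R^\sN$,}
\end{equation*}
the last equality coming from the choice $C^{p-2}=\kappa$. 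Thus $v$ is a positive entire solution of the scalar inequality $-\Delta v \ge v^{p-1}$ on $\R^\sN$.

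Finally, I would invoke the Liouville-type theorem of Gidas~\cite{gidas:80} for the inequality $-\Delta v \ge v^q$: in $\R^N$ with $N\geq 3$ no positive solution exists precisely when $1<q\le \frac{N}{N-2}$, while for $N\in\{1,2\}$ no positive solution exists for any $q>1$. Substituting $q=p-1$, the threshold $q\le N/(N-2)$ translates into $p\le \frac{2N-2}{N-2}$, which is exactly the range assumed in the statement; for $N\le 2$ the range $p>2$ is automatically allowed. In either case the existence of $v$ contradicts Gidas' result, completing the argument.

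The only nontrivial point is the exponent bookkeeping that matches the strict $(p-1)$--copositivity inequality with the critical Gidas exponent; everything else is a direct transcription of the cubic argument. Note in particular the reason for the restriction on $p$: for $p > \frac{2N-2}{N-2}$ the Liouville inequality $-\Delta v \ge v^{p-1}$ does admit positive entire solutions, so this test-function reduction is no longer available and a different method (not pursued here) would be needed.
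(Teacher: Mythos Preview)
Your argument is correct and essentially identical to the paper's own proof: both derive the homogeneity inequality $\sum_{i,j}\beta_{ij}c_j^{p/2}c_i^{p/2-1}\mu_i \ge \kappa(\sum_i \mu_i c_i)^{p-1}$, set $v=\kappa^{1/(p-2)}\sum_i \mu_i u_i$, compute $-\Delta v \ge v^{p-1}$, and invoke Gidas' Liouville theorem using $p-1\le N/(N-2)$. Your write-up is slightly more detailed (justifying the existence of $\kappa$ via compactness and spelling out the exponent translation), but the route is the same.
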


\begin{proof}
Let $\mu \in C^n_+$ be as in the definition above and take $\kappa=\kappa(B,\mu)>0$ such that
$$\sum_{i,j=1}^n \beta_{ij}c_j^\frac{p}{2} c_i^{\frac{p}{2}-1}\mu_i \geq \kappa \left(\sum_{i=1}^n \mu_i c_i \right)^{p-1}.$$
Suppose by contradiction that \eqref{eq:general-case} admits a nontrivial
solution $u=(u_1,\dots,u_n)$. Then the positive function $v:= \kappa^\frac{1}{p-2} \sum \limits_{i=1}^n \mu_i u_i$ satisfies
$$
-\Delta v= \kappa^\frac{1}{p-2} \sum_{i,j=1}^n \beta_{ij}u_j^\frac{p}{2} u_i^{\frac{p}{2}-1}\mu_i \ge
\kappa^{\frac{p-1}{p-2}}\Bigl(\sum \limits_{i=1}^n \mu_i u_i\Bigr)^{p-1} = v^{p-1} \qquad
\text{in $\R^N$.}
$$
By the result of Gidas \cite{gidas:80}, this is impossible since
$p-1\le \frac{N}{N-2}$ by assumption.
\end{proof}

Concerning the relationship between strict copositivity and strict
($p-1$)--copositivity, we have the following generalization of Proposition \ref{scc-implies-sc}

\begin{proposition}
\label{scc-implies-sc_withp}
Let $B \in S(n)$.
\begin{enumerate}
\item If $B$ is strictly $(p-1)$--copositive for some $p>2$, then it is also
strictly copositive.
\item If $n=2$ and $B$ is strictly copositive, then it is also
  strictly $(p-1)$--copositive for every $p>2$.
\end{enumerate}
\end{proposition}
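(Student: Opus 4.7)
The plan is to follow the proof of Proposition~\ref{scc-implies-sc} essentially verbatim, replacing the cubic exponents by $p/2$ and $p/2-1$. The hypothesis $p>2$ will play a double role: it ensures that $t\mapsto t^{p/2}$ is continuously differentiable on $[0,\infty)$, so that directional derivatives up to the boundary of $C^n_+$ are well defined, and it guarantees that $t\mapsto t^{p/2-1}$ is nondecreasing on $[0,\infty)$, which is what makes the two-component algebraic inequality in (ii) work.

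For (i), I would set $\tilde b_p(c):=\sum_{i,j=1}^n \beta_{ij}c_i^{p/2}c_j^{p/2}$ and prove by induction on $|\mathcal{N}|$ that $\tilde b_p(c)>0$ for all $c\in C_\mathcal{N}$. The base case $|\mathcal{N}|=1$ is immediate from evaluating \eqref{eq:2_withp} at a coordinate vector, which (together with $\mu_i>0$) forces $\beta_{ii}>0$. For the inductive step with $|\mathcal{N}_*|=l$, I would take $\tilde\mu\in C^n_+$ equal to $\mu$ on $\mathcal{N}_*$ and zero elsewhere, reduce to the case $c\in C_{\mathcal{N}_*}$ with $c_i>0$ for every $i\in\mathcal{N}_*$ (the other case being covered by the inductive hypothesis directly), and write $c=\hat c+t\tilde\mu$ with $t:=\min_{i\in\mathcal{N}_*}c_i/\mu_i>0$ and $\hat c\in C_\mathcal{N}\cup\{0\}$ for some $|\mathcal{N}|\le l-1$. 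A direct computation using the symmetry of $B$ gives
\[
\frac{d}{ds}\tilde b_p(\hat c+s\tilde\mu)=p\sum_{i,j=1}^n\beta_{ij}(\hat c_i+s\tilde\mu_i)^{p/2-1}\tilde\mu_i(\hat c_j+s\tilde\mu_j)^{p/2},
\]
and for $s>0$ the terms with $i\notin\mathcal{N}_*$ vanish (since $\tilde\mu_i=0$) while those with $j\notin\mathcal{N}_*$ vanish (since $\hat c_j+s\tilde\mu_j=0$ and $p/2>1$). What remains is precisely $p$ times \eqref{eq:2_withp} evaluated at $\hat c+s\tilde\mu\in C^n_+\setminus\{0\}$, hence strictly positive; combined with $\tilde b_p(\hat c)\ge 0$, integrating in $s$ yields $\tilde b_p(c)>0$.

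For (ii) I would imitate the trick from the cubic proof: in the two-component setting, take $\mu=(\beta_{11}^{-1/p},\beta_{22}^{-1/p})$ and set $\tilde c_i:=c_i\beta_{ii}^{1/p}$, so that a direct calculation yields
\[
\sum_{i,j=1}^2 \beta_{ij}c_j^{p/2}c_i^{p/2-1}\mu_i=\tilde c_1^{p-1}+\tilde c_2^{p-1}+\frac{\beta_{12}}{\sqrt{\beta_{11}\beta_{22}}}\bigl(\tilde c_2^{p/2}\tilde c_1^{p/2-1}+\tilde c_1^{p/2}\tilde c_2^{p/2-1}\bigr).
\]
The boundary cases $c_1=0$ or $c_2=0$ reduce immediately to $\beta_{ii}^{(p-1)/p}c_i^{p-1}>0$. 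For $c_1,c_2>0$, strict copositivity of $B$ gives $\beta_{12}/\sqrt{\beta_{11}\beta_{22}}>-1$, and the argument closes via the factorization
\[
x^{p-1}+y^{p-1}-y^{p/2}x^{p/2-1}-x^{p/2}y^{p/2-1}=(x^{p/2-1}-y^{p/2-1})(x^{p/2}-y^{p/2})\ge 0,
\]
which holds for $x,y>0$ because both factors are monotone in the same direction when $p>2$.

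I expect the main obstacle to be the integration step in (i), where one has to justify working up to the boundary of $C_{\mathcal{N}_*}$ even though $p/2-1$ may be less than $1$. The hypothesis $p>2$ is however exactly what makes $s\mapsto\tilde b_p(\hat c+s\tilde\mu)$ continuous on $[0,t]$ and $C^1$ on $(0,t]$, so the mean value theorem applies and the strictly positive derivative on $(0,t]$ transfers to the desired strict inequality $\tilde b_p(c)>\tilde b_p(\hat c)\ge 0$ at $s=t$.
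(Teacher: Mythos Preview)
Your proposal is correct and follows essentially the same approach as the paper: for (i) you spell out the induction on $|\mathcal{N}|$ via the directional derivative of $\tilde b_p$ along $\tilde\mu$, exactly as the paper indicates (it simply says ``similarly as in the proof of Proposition~\ref{scc-implies-sc}''), and for (ii) you use the same choice $\mu=(\beta_{11}^{-1/p},\beta_{22}^{-1/p})$ and substitution $\tilde c_i=c_i\beta_{ii}^{1/p}$ as the paper, making explicit the factorization $(x^{p/2-1}-y^{p/2-1})(x^{p/2}-y^{p/2})\ge 0$ that underlies the final inequality. Your extra care about boundary regularity and the explicit reduction to the case $c_i>0$ for all $i\in\mathcal{N}_*$ are welcome clarifications but do not change the argument.
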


\begin{proof}
i) Defining  $\tilde
b(c)=b(c^\frac{p}{2})=b(c_1^\frac{p}{2},\ldots, c_n^\frac{p}{2})$ for
$c\in C^n$, we can show similarly as in the proof of Proposition
\ref{scc-implies-sc} that $\tilde b$ is strictly positive on
$C^+_n\setminus\{0\}$, hence the same is true for $b$.\\
ii) Let $B=(\beta_{ij}) \in S(2)$ be strictly copositive, so that $\beta_{11},
\beta_{22}>0$ and $\beta_{12}> -\sqrt{\beta_{11} \beta_{22}}$.
 To show the
strict $(p-1)$--copositivity of $B$, we now consider $\mu:=(\frac{1}{\sqrt[p]{\beta_{11}}},\frac{1}{\sqrt[p]{\beta_{11}}})
        \in C^n_+$. Take an arbitrary $c=(c_1,c_2) \in
        C^2_+\setminus \{(0,0)\}$. If either $c_1=0$ or $c_2=0$ then $\sum_{i,j=1}^2 \beta_{ij} c_j^{\frac{p}{2}} \mu_i c_i^{\frac{p}{2}-1}$ is either equal to $\beta_{22}c_2^{p-1} \mu_2>0$ or $\beta_{11}c_1^{p-1} \mu_1>0$ respectively. Suppose now that $c_1,c_2\neq 0$ and put $\tilde c_i=c_i \sqrt[p]{\beta_{ii}}$ for $i=1,2$.
        Then
\begin{align*}
&\sum_{i,j=1}^2 \beta_{ij} c_j^{\frac{p}{2}} \mu_i c_i^{\frac{p}{2}-1}
=
\frac{1}{\sqrt[p]{\beta_{11}}}\ (\beta_{11} c_1^{p-1}+ \beta_{12}
    c_2^{\frac{p}{2}}c_1^{\frac{p}{2}-1}) + \frac{1}{\sqrt[p]{\beta_{22}}} (\beta_{22} c_2^{p-1}+ \beta_{12}
    c_1^{\frac{p}{2}} c_2^{\frac{p}{2}-1})
\\
&=\tilde c_1^{p-1} +\tilde c_2^{p-1} +
\frac{\beta_{12}}{\sqrt{\beta_{11}\beta_{22}}} (\tilde
c_2^{\frac{p}{2}} \tilde c_1^{\frac{p}{2}-1}+
\tilde c_1^{\frac{p}{2}} \tilde c_2^{\frac{p}{2}-1})>\tilde c_1^{p-1} +\tilde c_2^{p-1} -(\tilde
c_2^{\frac{p}{2}} \tilde c_1^{\frac{p}{2}-1}+
\tilde c_1^{\frac{p}{2}} \tilde c_2^{\frac{p}{2}-1})\geq 0,
\end{align*}
as required.
\end{proof}

By combining Theorem~\ref{sec:exist-nonex-rsn-1_withp} with Propositions
\ref{(p-1)-copositive-nonex} and \ref{scc-implies-sc_withp}, we
immediately get the following

\begin{corollary}
\label{coro:complete_result 1_withp}
Take $N\in \N$ and $2<p\leq \frac{2N-2}{N-2}$ if $N\geq 3$,  $2<p<\infty$ if $N\leq2$. Let $n=2$. If $\beta_{11},\beta_{22}$ are nonnegative, then the system
(\ref{eq:general-case}) admits a nontrivial solution if and only if
$B$ is not strictly copositive, i.e., if one of the strict
inequalities
$$
\beta_{11}>0,\quad \beta_{22}>0, \quad \beta_{12}>-\sqrt{\beta_{11}
  \beta_{22}}
$$
is not satisfied.
\end{corollary}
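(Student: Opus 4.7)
The proof is a direct synthesis of three results already at our disposal, so I would structure it as a chain of implications, handling the biconditional direction by direction and carefully tracking which portion of the hypothesis on $p$ is used where.

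For the forward (existence) direction, I would assume that $B$ is not strictly copositive; by the explicit characterization \eqref{eq:strictly copositive in dim n=2}, this means at least one of the strict inequalities $\beta_{11}>0$, $\beta_{22}>0$, $\beta_{12}>-\sqrt{\beta_{11}\beta_{22}}$ fails. Since $\beta_{11},\beta_{22}\geq 0$ by hypothesis, Theorem~\ref{sec:exist-nonex-rsn-1_withp} applies and immediately produces a nontrivial solution of \eqref{eq:general-case}. For this implication only subcriticality $2<p<2^*$ is used, so the narrower range in the statement is not needed here.

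For the backward (nonexistence) direction, I would assume that $B$ is strictly copositive. Since $n=2$, Proposition~\ref{scc-implies-sc_withp}(ii) promotes strict copositivity to strict $(p-1)$-copositivity, and this implication is valid for every $p>2$. Then Proposition~\ref{(p-1)-copositive-nonex} rules out any nontrivial solution of \eqref{eq:general-case}, and this is the step which consumes the stricter hypothesis: the upper bound $p\leq \frac{2N-2}{N-2}$ (when $N\geq 3$) is exactly what Proposition~\ref{(p-1)-copositive-nonex} requires.

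There is essentially no obstacle, since the work has been done in the preceding propositions; the only point worth flagging in the write-up is the asymmetric use of the hypothesis on $p$. The existence half only sees subcriticality, while the nonexistence half relies crucially on the Serrin--Gidas range $p-1\leq N/(N-2)$, which is what underlies the Liouville theorem for $-\Delta v\geq v^{p-1}$ on $\R^N$ invoked in Proposition~\ref{(p-1)-copositive-nonex}. Accordingly, the proof can be written in two short sentences: ``If $B$ is not strictly copositive, apply Theorem~\ref{sec:exist-nonex-rsn-1_withp}. If $B$ is strictly copositive, combine Proposition~\ref{scc-implies-sc_withp}(ii) with Proposition~\ref{(p-1)-copositive-nonex}.''
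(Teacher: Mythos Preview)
Your proposal is correct and follows exactly the paper's approach: the corollary is stated as an immediate consequence of combining Theorem~\ref{sec:exist-nonex-rsn-1_withp} with Propositions~\ref{(p-1)-copositive-nonex} and~\ref{scc-implies-sc_withp}. Your additional remark about the asymmetric use of the hypothesis on $p$ (subcriticality alone for existence, the Serrin--Gidas range for nonexistence) is accurate and a worthwhile observation.
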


Generalizing Theorem \ref{planar-case}, we can also derive sharp nonexistence results for the
case of $n \ge 3$ components in dimensions $N=1,2$. However, we have to restrict our attention to the case
$2<p\leq 4$, and the proof is somewhat more complicated than in the
case $p=4$.

\begin{theorem}
\label{planar-case_withp}
If $N\leq 2$, $2<p\leq 4$ and $B\in S(n)$ is strictly copositive, then \eqref{eq:general-case} does not admit a nontrivial solution.
\end{theorem}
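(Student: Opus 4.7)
The plan is to extend the test function argument of Theorem~\ref{planar-case}, replacing the multiplier $\phi_R^2/u_i$ by $\phi_R^{2q} u_i^{1-p/2}$ for a suitably large parameter $q$, with the same logarithmic cut-off $\phi_R$ used in the planar cubic case. Arguing by contradiction, I assume \eqref{eq:general-case} admits a nontrivial solution; as in the proof of Theorem~\ref{planar-case} I may assume $u_i>0$ on $\R^N$ for every $i$, after passing to the subsystem of nonzero components (whose relevant submatrix of $B$ inherits strict copositivity).

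Multiplying the $i$-th equation by $\phi_R^{2q} u_i^{1-p/2}$ and integrating over $\R^N$, the powers of $u_i$ cancel on the right-hand side to produce exactly $\sum_j \beta_{ij}\int u_j^{p/2}\phi_R^{2q}\,dx$. Completing the square on the left via the weighted Young inequality with parameter $\alpha:=p/2-1>0$, I expect
\begin{equation*}
\sum_{j=1}^n \beta_{ij}\int_{\R^N} u_j^{p/2}\,\phi_R^{2q}\,dx \;\le\; \frac{2q^2}{p-2}\int_{\R^N} u_i^{2-p/2}\,\phi_R^{2q-2}\,|\nabla\phi_R|^2\,dx.
\end{equation*}
This recovers the key estimate in Theorem~\ref{planar-case} when $p=4,\, q=1$ (in which case $u_i^{2-p/2}\equiv 1$).

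To decouple $u_i$ from the gradient of $\phi_R$, I would apply Hölder's inequality with conjugate exponents $p/(4-p)$ and $p/(2(p-2))$, both positive for $2<p<4$ (the boundary case $p=4$ being already contained in Theorem~\ref{planar-case}). Setting $c_i(R):=\int u_i^{p/2}\phi_R^{2q}\,dx$, this gives
\begin{equation*}
\int u_i^{2-p/2}\,\phi_R^{2q-2}\,|\nabla\phi_R|^2\,dx \;\le\; c_i(R)^{(4-p)/p}\,T_R, \qquad T_R:=\Bigl(\int_{\R^N} \phi_R^{2q-p/(p-2)}\,|\nabla\phi_R|^{p/(p-2)}\,dx\Bigr)^{2(p-2)/p}.
\end{equation*}
Choosing $q\ge p/(2(p-2))$ makes the exponent of $\phi_R$ inside $T_R$ nonnegative, and a direct computation using the substitution $u=\log(R^2/r)$ (as in Theorem~\ref{planar-case}) shows that for $N=2$, $T_R\sim R^{(p-4)/(p-2)}(\log R)^{-p/(p-2)}\to 0$ as $R\to\infty$, with even faster decay for $N=1$.

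Finally, multiplying the previous inequality by $c_i(R)$ and summing over $i$, the left-hand side is $\sum_{i,j}\beta_{ij}c_i(R)c_j(R) \ge \gamma \sum_i c_i(R)^2$ for some $\gamma>0$ by strict copositivity of $B$ (homogeneity plus compactness of the unit sphere in $C^n_+$), while the right-hand side is bounded by $C\,T_R \sum_i c_i(R)^{4/p}$. A further Hölder estimate $\sum_i c_i^{4/p}\le n^{(p-2)/p}(\sum_i c_i^2)^{2/p}$ yields
\begin{equation*}
\sum_{i=1}^n c_i(R)^2 \;\le\; C\,T_R^{p/(p-2)} \;\to\; 0,
\end{equation*}
so each $c_i(R)\to 0$; by Fatou's lemma $\int_{\R^N} u_i^{p/2}\,dx=0$, contradicting $u_i>0$. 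The main obstacle is matching the Young and Hölder exponents to the power-type nonlinearity: the factor $u_i^{2-p/2}$ can be absorbed by $c_i(R)^{(4-p)/p}$ only when $(4-p)/p\ge 0$, which is precisely the upper bound $p\le 4$ assumed in the statement; for $p>4$ both the test function choice and the Hölder step break down, so this method does not extend beyond the cubic threshold.
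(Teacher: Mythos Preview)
Your argument is correct and follows essentially the same route as the paper: multiply the $i$-th equation by $\phi_R^{a}\,u_i^{1-p/2}$, complete the square, and exploit strict copositivity after summing. The only cosmetic differences are that the paper fixes the specific exponent $a=p/(p-2)$ (your $2q$) and applies Young's inequality \emph{after} summing to absorb the $u_i^{(4-p)/2}$ term, whereas you apply H\"older \emph{before} summing---both routes reduce to showing $\int_{\R^N}|\nabla\phi_R|^{p/(p-2)}\,dx\to 0$, which forces $c_i(R)\to 0$.
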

\begin{proof}
Suppose by contradiction that \eqref{eq:general-case} admits a nontrivial solution. Without loss of generality, we may assume that
$u_i>0$ in $\R^N$ for $i=1,\dots,n$. For $R>0$, consider the function $\phi_R$ defined in the proof of Theorem \ref{planar-case}. We recall that for $N=1,2$ we have
$$
\int_{\R^N}|\nabla \phi_R|^2\:dx\to 0 \qquad  \text{as $R \to \infty$.}
$$
Observe moreover that $|\nabla \phi_R|\leq C$ for some $C>0$ independent of $R>0$.

For $p=4$ the result of Theorem \eqref{planar-case_withp} is exactly the content of Theorem \ref{planar-case}.
Fix $2<p< 4$ and let $a\in(2,+\infty)$ be such that $p=\frac{4a}{2a-2}$. Multiplying (\ref{eq:special-case}) with
$\phi_R^a u_i^{1-\frac{p}{2}}$ and integrating by parts, we get
\begin{eqnarray*}
\sum_{j=1}^n\beta_{ij}\int_{\R^N} u_j^\frac{p}{2} \phi_R^a\:dx&=& \int_{\R^N} -\Delta u_i \phi_R^a u_i^{1-\frac{p}{2}} \:dx=
\int_{\R^N} \nabla u_i \cdot \nabla \Bigl(\phi_R^a u_i^{1-\frac{p}{2}} \Bigr)\:dx\\
&=& \int_{\R^N} \nabla u_i\cdot \Bigl( a\phi_R^{a-1}u_i^{1-\frac{p}{2}}\nabla \phi_R - \Bigl(\frac{p-2}{2}\Bigr)\phi_R^a u_i^{-\frac{p}{2}} \nabla u_i  \Bigr) \:dx\\
&=& -\int_{\R^N} \Bigl| \sqrt{\frac{p-2}{2}}\phi_R^\frac{a}{2}u_i^{-\frac{p}{4}}\nabla u_i - \frac{a}{2}\sqrt{\frac{2}{p-2}}\phi_R^{\frac{a}{2}-1}u_i^{1-\frac{p}{4}}\nabla \phi   \Bigr|^2\:dx +\\
&&+ \frac{a^2}{2(p-2)}\int_{\R^N}\phi_R^{a-2}u_i^\frac{4-p}{2}|\nabla \phi|^2\:dx\\
&\leq& \frac{a^2}{2(p-2)}\int_{\R^N}\phi_R^{a-2}u_i^\frac{4-p}{2}|\nabla \phi|^2\:dx.
\end{eqnarray*}
Next we let $c_j(R):= \int_{\R^N} u_j^\frac{p}{2} \phi_R^a\:dx$ for $j=1,\dots,n$. By multiplying the above inequality with $c_i(R)$ and
summing over $i$, we obtain from the strict copositivity of the matrix $B$
\begin{multline*}
0 \le \kappa_1 \sum_{i=1}^n c_i^2(R) \le \sum_{i,j=1}^n \beta_{ij}c_i(R) c_j(R) \le \frac{a^2}{2(p-2)} \sum_{i=1}^n c_i(R) \int_{\R^N}\phi_R^{a-2}u_i^\frac{4-p}{2}|\nabla \phi|^2\:dx \leq \\
\leq \frac{a^2}{2(p-2)} \Bigl(\max_{1\leq i\leq n}c_i(R) \Bigr) \sum_{i=1}^n  \int_{\R^N}\phi_R^{a-2}u_i^\frac{4-p}{2}|\nabla \phi|^2\:dx
\end{multline*}
and hence, by Young's inequality,
\begin{multline*}
\kappa_2 \sum_{i=1}^n \int_{\R^N}u_i^\frac{p}{2}\phi_R^a\:dx \leq \sum_{i=1}^n \int_{\R^N} \phi_R^{a-2} u_i^\frac{4-p}{2}|\nabla \phi_R|^2\: dx\leq\\
\leq \frac{\kappa_2}{2}\sum_{i=1}^n \int_{\R^N} u_i^\frac{p}{2}\phi_R^\frac{p(a-2)}{4-p}\:dx + \sum_{i=1}^n \kappa_3 \int_{\R^N} |\nabla \phi_R|^\frac{2p}{2p-4} \:dx.
\end{multline*}
Since $\frac{p(a-2)}{4-p}=a$ and $\frac{2p}{2p-4}\geq 2$, we finally get
$$\frac{\kappa_2}{2}\sum_{i=1}^n\int_{\R^N} u_i^\frac{p}{4}\phi_R^a\:dx \leq \kappa_3 \int_{\R^N}|\nabla \phi_R|^2\: dx\to 0  \qquad \text{ as }R\to +\infty.$$
Thus $\int_{B_R(0)} u_i^\frac{p}{2}\,dx \to 0$ as $R\to +\infty$ and hence $u_i\equiv 0$ for every $i$, contrary to what we have assumed.
\end{proof}

By combining Theorems \ref{sec:exist-nonex-rsn-1_withp} and \ref{planar-case_withp} we obtain the following result.

\begin{corollary}\label{coro:complete_result 2_withp}
Let $N\leq 2$, $2<p\leq 4$ and let $B\in S(n)$, $n\geq 2$, be such that $\beta_{ii}\geq 0$. Then the system \eqref{eq:special-case} admits a nontrivial solution if and only if $B$ is not strictly copositive.
\end{corollary}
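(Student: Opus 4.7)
The plan is to read off the corollary directly from the two main theorems of this section. Concretely, for the "if" direction I would invoke Theorem~\ref{sec:exist-nonex-rsn-1_withp}: assuming $\beta_{ii}\ge 0$ for every $i$ and that $B$ is not strictly copositive, that theorem already produces a nontrivial solution of \eqref{eq:general-case} on $\R^\sN$ (the restriction $p\le 4$ playing no role here, only the sub-criticality $p<2^*$). For the converse direction, I would appeal to Theorem~\ref{planar-case_withp}: under $N\le 2$, $2<p\le 4$, and strict copositivity of $B$, any putative nontrivial positive solution is ruled out by the logarithmic cutoff test function argument together with the Young-type estimate carried out in that proof.

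Since both implications have already been fully established, there is no remaining work to do—no variational principle to run, no test function to construct, no matrix inequality to verify. The main obstacle, if one wanted to call it that, is really a bookkeeping point: the two theorems do not have symmetric hypotheses on $p$. The existence half uses only $2<p<2^*$ together with $\beta_{ii}\ge 0$, whereas the nonexistence half requires the stronger restriction $2<p\le 4$, which is what forces the statement of the corollary to be constrained to the range $2<p\le 4$. Dropping that restriction would leave a gap analogous to the open multicomponent case for the cubic system in dimensions $N\ge 3$, $n\ge 3$ mentioned earlier in the introduction.
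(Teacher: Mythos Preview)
Your proposal is correct and matches the paper's own argument: the corollary is stated immediately after Theorems~\ref{sec:exist-nonex-rsn-1_withp} and~\ref{planar-case_withp} precisely as their combination, with no additional proof given. Your observation about the asymmetry of hypotheses on $p$ between the two theorems is also accurate and explains why the corollary is restricted to $2<p\le 4$.
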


Finally, for a general dimension $N$, we have an easy sufficient condition to check the strict ($p-1$)--copositivity of a matrix, and hence also a general sufficient condition for the nonexistence of solutions of \eqref{eq:general-case}.

\begin{proposition}
\label{sec:introduction--1_withp}
Suppose that
\begin{equation}\label{eq:cubic_copositivity_criterium}
\beta_{ii}> 0 \qquad \text{and}\qquad \sum_{{j=1}\atop{j
    \not=i}}^n\beta_{ij}^- >  -\beta_{ii} \qquad \text{for
  $i=1,\dots,n$,}
\end{equation}
where $\beta_{ij}^-=\min\{\beta_{ij},0\}$.
Then $B$ is strictly $(p-1)$--copositive for every $p>2$. In particular if \eqref{eq:cubic_copositivity_criterium} holds and either $2<p\leq \frac{2N-2}{N-2}$ with $N\geq 3$, or $2<p<\infty$ and $N\leq2$, then
\eqref{eq:special-case} does not admit a nontrivial
solution by Proposition~\ref{(p-1)-copositive-nonex}.
\end{proposition}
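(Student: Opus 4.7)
The strategy is to repeat the argument of Proposition~\ref{sec:introduction--1} with the natural generalization of the elementary inequality $s^2 t + s t^2 \le s^3+t^3$. The first step is to verify that for all $s,t \ge 0$ and $p>2$,
\[
s^{p/2}t^{p/2-1} + s^{p/2-1}t^{p/2} \le s^{p-1} + t^{p-1},
\]
by rewriting the difference of the two sides as $(s^{p/2}-t^{p/2})(s^{p/2-1}-t^{p/2-1})$, which is nonnegative since both factors share the same sign (the maps $r\mapsto r^{p/2}$ and $r\mapsto r^{p/2-1}$ being both nondecreasing on $[0,\infty)$ because $p>2$).

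With this inequality in hand, I would take $\mu=(1,\dots,1)\in C^n_+$ and split the sum appearing in the definition of strict $(p-1)$-copositivity into its diagonal and off-diagonal contributions. Exploiting the symmetry $\beta_{ij}=\beta_{ji}$, the off-diagonal part rewrites as
\[
\sum_{i\neq j}\beta_{ij}\,c_j^{p/2}c_i^{p/2-1} = \tfrac{1}{2}\sum_{i\neq j}\beta_{ij}\bigl(c_j^{p/2}c_i^{p/2-1} + c_i^{p/2}c_j^{p/2-1}\bigr).
\]
Replacing $\beta_{ij}$ by $\beta_{ij}^-\le\beta_{ij}$ (against the nonnegative bracket) and then invoking the elementary inequality above (noting that the inequality reverses when multiplied by $\beta_{ij}^-\le 0$) gives
\[
\sum_{i\neq j}\beta_{ij}\,c_j^{p/2}c_i^{p/2-1} \ge \tfrac{1}{2}\sum_{i\neq j}\beta_{ij}^-\bigl(c_i^{p-1}+c_j^{p-1}\bigr) = \sum_{i\neq j}\beta_{ij}^-\, c_i^{p-1},
\]
the last equality using the symmetry of $(i,j)\mapsto\beta_{ij}^-$ together with relabeling.

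Combining this with the diagonal contribution $\sum_i \beta_{ii}\, c_i^{p-1}$ yields
\[
\sum_{i,j=1}^n \beta_{ij}\,c_j^{p/2}c_i^{p/2-1} \ge \sum_{i=1}^n c_i^{p-1}\Bigl(\beta_{ii} + \sum_{j\neq i}\beta_{ij}^-\Bigr) \ge \kappa_0\sum_{i=1}^n c_i^{p-1},
\]
where $\kappa_0:=\min_{i}\bigl(\beta_{ii}+\sum_{j\neq i}\beta_{ij}^-\bigr)>0$ by hypothesis~\eqref{eq:cubic_copositivity_criterium}. This is strictly positive for every $c\in C^n_+\setminus\{0\}$, so $B$ is strictly $(p-1)$-copositive with weight $\mu=(1,\dots,1)$, and the nonexistence claim then follows directly from Proposition~\ref{(p-1)-copositive-nonex} whenever the subcriticality hypothesis on $p$ is in force. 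I do not foresee any substantial obstacle; the only minor point of care is to interpret $c_i^{p/2-1}$ as $0$ when $c_i=0$, which is legitimate since $p/2-1>0$, so all manipulations above remain valid on the full cone $C^n_+$.
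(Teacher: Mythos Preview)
Your proof is correct and follows essentially the same approach as the paper, which also invokes the inequality $s^{p/2}t^{p/2-1}+s^{p/2-1}t^{p/2}\le s^{p-1}+t^{p-1}$ and refers back to the computation in Proposition~\ref{sec:introduction--1}. In fact you give more detail than the paper does, including a clean proof of the elementary inequality via the factorization $(s^{p/2}-t^{p/2})(s^{p/2-1}-t^{p/2-1})\ge 0$.
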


\begin{proof}
Using the simple inequality $s^\frac{p}{2} t^{\frac{p}{2}-1} +s^{\frac{p}{2}-1} t^\frac{p}{2} \le s^{p-1}+t^{p-1}$ for $s,t
\ge 0$ and the fact that $B=(\beta_{ij})_{ij}$ is symmetric, we obtain, by arguing exactly as in the proof of Proposition~\ref{sec:introduction--1}, that
$$\sum \limits_{i,j=1}^n \beta_{ij}c_j^\frac{p}{2} c_i^{\frac{p}{2}-1} \ge \kappa_0 \sum
\limits_{i=1}^n
c_i^{p-1} >0 \qquad $$
for every $c \in C^n_{+}\setminus\{0\}$ and $\kappa_0:= \min \limits_{i=1,\dots,n}\Bigl(\beta_{ii}+ \sum_{{j \not=i}}\beta_{ij}^-\Bigr) >0$.
\end{proof}

\section{Appendix}

Here we prove that the matrix
$$
B_\eps= \left (
\begin{array}{ccc}
1&-1+\eps&-1+\eps\\
-1+\eps &1&1\\
-1+\eps &1&1
\end{array}
\right), \qquad \eps>0.
$$
(see (\ref{eq:4})) is not strictly cubically copositive for $\eps>0$
small. This was claimed in Section~\ref{sec:introduction}. Suppose by contradiction that there
exists a sequence of positive numbers $\eps_k \to 0$ such that
$B_{\eps_k}$ is strictly cubically copositive for all $k$. Then there exists $(\mu_1^k,\mu_2^k,\mu_3^k)\in \innt(C^3_+)$ such that
\begin{multline}\label{eq:strict copitivity does not imply strict cubically positive}
\mu_1^k \left( c_1^3 + (\eps_k-1)c_1c_2^2 + (\eps_k-1)c_1 c_3^2\right) + \mu_2^k \left((\eps_k-1) c_1^2c_2 + c_2^3 + c_2 c_3^2\right)+ \\
+ \mu_3^k\left( (\eps_k-1)c_1^2 c_3 + c_2^2 c_3 + c_3^3  \right)>0
\end{multline}
for every $c\in C^3_+\setminus\{0\}$. By dividing the previous
inequality by
$\mu_1^k>0$, we can suppose without loss of generality that $\mu_1^k=1$. We show that
\begin{equation}\label{eq: mu_2^eps , mu_3^eps to 1}
\mu_2^k\to 1,\quad \mu_3^k \to 1 \qquad \text{ as }k  \to \infty.
\end{equation}
Taking $c=(\lambda,1,0)\in C^3_+$ in \eqref{eq:strict copitivity does
  not imply strict cubically positive} with $\lambda>0$, we obtain
$$
(\lambda^3+(\eps_k-1)\lambda)+\mu_2^k ((\eps_k-1)\lambda^2+1)>0 \quad
\text{ for every } k \in \N
$$
and hence
$$
(\lambda^3-\lambda)+\liminf_{k \to \infty} \Bigl[\mu_2^k (1-\lambda^2)\Bigr] \ge 0.
$$
As a consequence, we get
$$
\liminf_{k \to \infty} \mu_2^k \ge \lambda \quad \text{for all
  $\lambda \in (0,1)$}\qquad \text{and} \qquad
\limsup \limits_{k \to \infty} \mu_2^k \le \lambda \quad \text{for all
$\lambda>1$,}
$$
which eventually yields $\lim \limits_{k \to \infty}\mu_2^k= 1.$
Considering $c=(\lambda,0,1)$ in (\ref{eq:strict copitivity does not
  imply strict cubically positive}), a similar argument shows that $\lim
\limits_{k \to \infty}\mu_3^k= 1.$ By combining \eqref{eq:strict copitivity does not imply strict cubically positive} with \eqref{eq: mu_2^eps , mu_3^eps to 1}, we conclude that
$$
(c_1^3-c_1 c_2^2-c_1 c_3^2) + (-c_1^2 c_2 + c_2^3 +c_2 c_3^2)+(-c_1^2
c_3 + c_2^2 c_3 + c_3^3)\geq 0 \quad \text{for all $c\in C^3_+$,}
$$
which is false when evaluated at $c=(3,2,2)$.
\\[15pt]


\noindent {\bf Acknowledgments.}
H. Tavares was supported by FCT, grant SFRH/BD/28964/2006 and Financiamento Base 2008 - ISFL/1/209.\\
A large part of the paper was written while H. Tavares was visiting the University of Frankfurt. His stay was also partially supported by a grant of the Justus-Liebig-University, Giessen.


\newcommand{\noopsort}[1]{} \newcommand{\printfirst}[2]{#1}
  \newcommand{\singleletter}[1]{#1} \newcommand{\switchargs}[2]{#2#1}

\noindent \verb"htavares@ptmat.fc.ul.pt"\\
University of Lisbon, CMAF, Faculty of Science, Av. Prof. Gama Pinto
2, 1649-003 Lisboa, Portugal

\noindent \verb"susanna.terracini@unimib.it"\\
Dipartimento di Matematica e Applicazioni, Universit\`a degli Studi
di Milano-Bicocca, via Bicocca degli Arcimboldi 8, 20126 Milano,
Italy

\noindent \verb"gianmaria.verzini@polimi.it"\\
Dipartimento di Matematica, Politecnico di Milano, p.za Leonardo da
Vinci 32,  20133 Milano, Italy

\noindent \verb"weth@math.uni-frankfurt.de"\\
Institut f\"ur Mathematik, Goethe-Universit\"at Frankfurt, Robert-Mayer-Str. 10,
D-60054 Frankfurt a.M., Germany


\begin{thebibliography}{1}

\bibitem{ac} A. Ambrosetti and E. Colorado,
\newblock Bound and ground states of coupled nonlinear Schrodinger equations,
\newblock {\em C.R. Math. Acad. Sci. Paris} 342 (2006), 453-458.

\bibitem{bartschwang} T. Bartsch, Z.-Q. Wang
\newblock Note on ground states of nonlinear Schr\"odinger systems
\newblock {\em J. Partial Diff. Eqs,} 19 (2006), 200-207.

\bibitem{clll} S. Chang, C. Lin, T. Lin, W. Lin,
\newblock Segregated nodal domains of two-dimensional
multispecies Bose-Einstein condensates.,
\newblock {\em Phys. D} 196 (2004), 341-361.

\bibitem{dancer.wei.weth:10} N. Dancer, J.-C. Wei, T. Weth,
\newblock A priori bounds versus multiple existence of positive solutions for a nonlinear Schr\"odinger system,
\newblock {\em Ann. Inst. H. Poincar\'e Anal. Non Lin\'eaire} 27 (2010), 953-969.

\bibitem{dancer.weth:10} N. Dancer, T. Weth,
\newblock in preparation.

\bibitem{diananda:62} P.H. Diananda,
\newblock On non-negative forms in real variables some or all of which are
non-negative.
\newblock Proc. Cambridge Philos. Soc.  58  (1962) 17--25.

\bibitem{EGB} B.\ D.\ Esry, C.\ H.\ Greene, J.\ P.\ Burke\ Jr.,
J.\ L.\ Bohn,
\newblock Hartree-Fock theory for double condensates,
\newblock \textit{Phys.\ Rev.\ Lett.} 78 (1997), 3594-3597.

\bibitem{gidas:80}
B.~Gidas.
\newblock Symmetry properties and isolated singularities of positive solutions of nonlinear elliptic equations.
\newblock In {\em Nonlinear partial differential equations in
  engineering and applied science (Proc. Conf., Univ. Rhode Island, Kingston, R.I., 1979)},
  volume~54 of {\em Lecture Notes in Pure and Appl. Math.}, pages
  255--273. Dekker, New York, 1980.

\bibitem{gs_apriori} B. Gidas, J. Spruck,
\newblock A priori bounds for positive solutions
of nonlinear elliptic equations,
\newblock {\em Comm. Part. Diff. Eq.} 6 (1981), 883--901.

\bibitem{gs_nonexistence} B. Gidas, J. Spruck,
\newblock Global and local behavior of positive solutions of nonlinear elliptic equations,
\newblock {\em Comm. Pure Appl. Math.} 34 (1981), 525--598

\bibitem{jacobson} D.H.. Jacobson,
\newblock Extensions of linear-quadratic control, optimization and
matrix theory.
\newblock Mathematics in Science and Engineering, Vol. 133. Academic Press, London-New York, 1977

\bibitem{pingyuyu} P. Li, Y. Y. Feng,
\newblock Criteria for Copositive Matrices of Order Four,
\newblock {\em Linear Algebra Appl.} 194 (1993), 109--124

\bibitem{linwei} T.-C. Lin, J.-C. Wei
\newblock Ground states of $N$ coupled nonlinear Schr\"odinger equations in $\R^n$, $n\leq 3$,
\newblock {\em Communications in Mathematical Physics} 255 (2005), 629-653.

\bibitem{lw2} T.-C. Lin and J.-C. Wei,
\newblock Spikes in two coupled nonlinear Schrodinger equations,
 {\em Ann. de l'Institut H. Poincar\'e Analyse Nonlin\'eare} 22 (2005), 403-439.

\bibitem{mmp} L.A. Maia, E. Montefusco and B. Pellacci,
\newblock Positive solutions for a weakly coupled nonlinear Schr\"odinger system,
\newblock \textit{J. Diff. Eqns.} 229 (2006), 743-767.

\bibitem{norisramos}
B. Noris, M. Ramos,
\newblock Existence and bounds of positive solutions for a nonlinear Schr\"odinger system,
\newblock {\em Proc. Amer. Math. Soc.} 138 (2010), 1681-1692.

\bibitem{nttv:2010}
B. Noris, H. Tavares, S. Terracini, G. Verzini,
\newblock Uniform H\"older bounds for nonlinear Schr\"odinger systems with strong competition,
\newblock {\em Comm. Pure Appl. Math.} 63 (2010), 267-302.


\bibitem{rz} W. Reichel, H. Zou,
\newblock Non-existence results for
 semilinear cooperative elliptic systems via moving spheres,
\newblock
{\em J. Differential Equations} 161 (2000), 219--243.

\bibitem{sirakov}
B. Sirakov,
\newblock Least energy solitary waves for a system of nonlinear
Schrodinger equations,
\newblock {\em Comm. Math. Physics} 271 (2007), 199-221.

\bibitem{walter:55}
W. Walter,
\newblock \"Uber ganze L\"osungen der Differentialgleichung $\Delta
u=f(u)$.
\newblock Jber. Deutsch. Math. Verein. 57 (1955), 94--102.

\bibitem{ww1}
J.C.~Wei, T. Weth,
\newblock Nonradial symmetric bound states for a system of coupled Schr\"odinger equations,
\newblock {\em Rend. Lincei Mat. Appl.} 18 (2007), 279-293.

\bibitem{ww2}
J.C.~Wei, T. Weth,
\newblock Radial solutions and phase separation in a system of two
coupled Schr\"odinger equations,
\newblock  {\em Arch. Rational Mech. Anal.} 190 (2008), 83--106.


\bibitem{willem}
M.~Willem.
\newblock {\em Minimax theorems}.
\newblock Progress in Nonlinear Differential Equations and their Applications, 24. Birkh\"auser Boston Inc., Boston, MA, 1996.
































\end{thebibliography}
\end{document}